\documentclass[11pt,leqno]{amsart}
\usepackage{amsmath}
\usepackage{amssymb}
\usepackage{amsthm}
\usepackage[dvips]{graphics}
\usepackage{hyperref}
\usepackage{graphicx}
\usepackage[ruled,vlined]{algorithm2e}
\usepackage{multirow}
\usepackage{bbm}
\usepackage{subcaption}

\renewcommand{\arraystretch}{1.2}
\newtheorem{theorem}{Theorem}[section]
\newtheorem{lemma}{Lemma}[section]

\newtheorem{definition}{Definition}[section]

\newtheorem{example}{Example}[section]

\renewcommand{\arraystretch}{1.2}
\usepackage{tikz}
\usetikzlibrary{arrows.meta}
\tikzset{
  slpVar/.style = {draw, rounded corners=2pt, minimum width=14mm, minimum height=6mm, font=\small, fill=blue!8},
  slpOp/.style  = {draw, rounded corners=2pt, minimum width=16mm, minimum height=7mm, font=\small, fill=orange!12},
  slpOut/.style = {draw, rounded corners=2pt, minimum width=28mm, minimum height=7mm, font=\small\bfseries, fill=green!12},
  slpEdge/.style= {->, >=Latex, line width=0.9pt}
}

\def\ba{\begin{array}}
\def\ea{\end{array}}
\def\beq{\begin{equation}}
\def\eeq{\end{equation}}
\def\bea{\begin{eqnarray}}
\def\eea{\end{eqnarray}}
\def\beann{\begin{eqnarray*}}
\def\eeann{\end{eqnarray*}}

\def\R{\mathbb{R}}
\def\S{\mathbb{S}}

\def\inte{\textup{int}}

\def\ln{\textup{ln}}

\newcommand{\ignore}[1]{}

\newcommand{\tx}[1]{\texttt{#1}}
\newcommand{\p}[1]{\pmb{#1}}
\def\inte{\textup{int}}

\title{\bf Efficient Interior-Point Methods for Hyperbolic Programming via Straight-Line Programs}
\author{
Mehdi Karimi \and Levent Tun\c{c}el}
\thanks{\noindent  Mehdi Karimi: Department of Mathematics, Illinois State University, Normal, IL, 61761.  ({e-mail: \bf mkarim3@ilstu.edu}).   Research of this author was supported in part by	 the National
Science Foundation (NSF) under Grant No. CMMI-2347120.\\
 Levent Tun\c{c}el: Department of Combinatorics and Optimization, Faculty of Mathematics,
University
of Waterloo, Waterloo, Ontario N2L 3G1, Canada (e-mail: {\bf 
levent.tuncel@uwaterloo.ca}). Research of this author was supported in part by
a Discovery Grant from the Natural Sciences and
Engineering Research Council (NSERC) of Canada.}

\begin{document}
\begin{abstract}
Hyperbolic (HB) programming generalizes many popular convex optimization problems, including semidefinite and second-order cone programming. Despite substantial theoretical progress on HB programming, efficient computational tools for solving large-scale hyperbolic programs remain limited. 
This paper presents DDS 3.0, a new release of the Domain-Driven Solver, which provides an efficient interior-point implementation tailored for hyperbolic programming. A key innovation lies in a new straight-line program (SLP) representation that enables compact representation and efficient computation of hyperbolic polynomials, their gradients, and Hessians. The SLP structure significantly reduces computational cost, allowing the Hessian to be computed in the same asymptotic complexity as the gradient through a batched reverse-over-forward differentiation scheme. 
We further introduce an improved corrector step for the primal-dual interior-point method, enhancing stability and convergence on convex sets where only the primal self-concordant barrier is efficiently computable. We create a  comprehensive benchmark library  beyond the elementary symmetric polynomials, using several different techniques. Numerical experiments demonstrate substantial performance gains of DDS 3.0 compared to first-order Frank–Wolfe algorithm, homotopy method, and SDP software utilizing SDP relaxations.

\end{abstract}
\maketitle

\pagestyle{myheadings} \thispagestyle{plain}
\markboth{KARIMI and TUN{\c C}EL}
{Hyperbolic Programming}

%%%%%%%%%%%%%%%%%%%%%%%
\section{Introduction} \label{sec:hyper}
Convex optimization forms a major part of the foundation of modern optimization theory and machine learning. Among its many subclasses, hyperbolic programming stands out as one of the most general and elegant frameworks, involving constraints defined by \emph{hyperbolic (HB) polynomials}. A \emph{homogeneous} polynomial $p(x) : \R^m \rightarrow \R$ (every monomial has the same degree $d$) is called a  \emph{hyperbolic polynomial in the direction $\p{e} \in \R^m$} if 
\begin{itemize}
\item $p(\p{e})>0$. 
\item for every $\p{x} \in \R^m$, the univariate polynomial $p(\p{x}-t\p{e})$ has only real roots. 
\end{itemize}
For a hyperbolic polynomial $p$ and a given direction $\p{e}$, the roots of $p(\p{x}-t\p{e})$ are called the \emph{eigenvalues} of $\p{x}$, for a given $\p{x} \in \R^m$. 
We define the \emph{characteristic map} $\lambda: \R^m \rightarrow \R^d$ with respect to $p$ and $\p{e}$ such that 
for every $\p{x} \in \R^m$, the elements of $\lambda(\p{x})$ are the roots of the univariate polynomial $p(\p{x}-t\p{e})$, in non-increasing order $\lambda_1(\p{x}) \geq \lambda_2(\p{x}) \geq \cdots \geq \lambda_d(\p{x})$.  
The underlying \emph{hyperbolicity cone} is defined as
\begin{eqnarray} \label{eq:hyper-1}
\Lambda_+(p,\p{e}):=\{\p{x} \in \R^m: \lambda(\p{x}) \geq 0\}. 
% \Lambda(p,e):=\{x \in \R^m: p(x+\lambda e)  \geq 0, \forall \lambda \geq 0\}. 
\end{eqnarray}
To see some examples, by choosing the HB polynomial $p(\p{x}):=x_1^2-x_2^2-\cdots-x_m^2$ in the direction of $\p{e}:=(1,0,\ldots,0)^\top$, we see that the second-order cone is a hyperbolicity cone. By choosing the HB polynomial $p(X) := \det(X)$ defined on $\S^n$ (the set $n$-by-$n$ symmetric matrices)  in the direction of $I$, we see that the positive semidefinite cone is a hyperbolicity cone. 
\emph{Hyperbolic programming} is a convex optimization problem that involves  a hyperbolicity cone constraint. HB programs are one of the largest classes of well-structured convex optimization problems. We just saw that the second-order cone programming (SOCP) and semidefinite programming (SDP) are special cases of HB programming. 

Hyperbolic polynomials have been studied at least since the 1930's
(going back to the work of Petrovsky \cite{Petrovsky1937}).  Starting in the early 1990's
there has been a renewed interest stemming from combinatorics
as well as optimization.  This increased amount of activity
allowed the subject to branch into various research areas
including systems and control theory, operator theory (see,
for instance, the Marcus-Spielman-Srivastava \cite{MSS2015} solution of
Kadison-Singer problem), interior-point methods (see, for instance,
\cite{guler1997hyperbolic,renegar2006hyperbolic,NT2016,DTV2025} and the references therein),
discrete optimization and combinatorics (see, for instance,
\cite{Gurvits2008}, \cite{Wagner2011}, \cite{AOV2018} and references therein),
semidefinite programming and semidefinite representations, matrix
theory as well as theoretical computer science.

Helton-Vinnikov Theorem~\cite{Vinnikov1993,HV2007,LPR2005} implies that
all three dimensional hyperbolicity
cones are spectrahedra and every hyperbolic
polynomial giving rise to a 3-dimensional hyperbolicity cone admits a
very strong determinantal representation.
There are many generalizations of Helton-Vinnikov theorem (see
\cite{SV2018} and the references therein),
counter examples to certain proposed generalizations of Helton-Vinnikov
Theorem (see \cite{branden2011obstructions} and the references
therein), various spectrahedral and spectrahedral-shadow representations
for interesting hyperbolicity cones (see \cite{NS2015},
\cite{K2021} and the references therein).
HP programming appears in many other areas such as control theory and combinatorial optimization \cite{gurvits2006hyperbolic,borcea2009lee}

One of the appealing features of hyperbolic programming is the potential for designing efficient interior-point methods \cite{interior-book}, since every hyperbolicity cone is equipped with a natural self-concordant (s.c.) barrier function. 
 
\begin{theorem}[G{\"u}ler \cite{guler1997hyperbolic}] Let $p(\p{x})$ be a homogeneous polynomial of degree $d$, which is hyperbolic in direction $\p{e}$. Then, the function $-\ln(p(\p{x}))$ is a $d$-logarithmicaly homogenous s.c.\ barrier  for $\Lambda_+(p,\p{e})$. 
\end{theorem}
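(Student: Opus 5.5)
Set $F(\p{x}) := -\ln p(\p{x})$ on the interior of $\Lambda_+(p,\p{e})$. Both defining properties of a $d$-logarithmically homogeneous s.c.\ barrier can be read off a restriction of $F$ to a line, because along lines through interior points $p$ factors completely. I would first record the standard facts about hyperbolicity cones. The interior of $\Lambda_+(p,\p{e})$ is the connected component of $\{p>0\}$ containing $\p{e}$. It is a convex cone, and by G\aa rding's theorem $p$ is hyperbolic in every direction $\p{u}$ in this interior. In particular, for $\p{u}$ in the interior and any $\p{h}\in\R^m$, homogeneity and hyperbolicity in direction $\p{u}$ give
\[
p(\p{u}+t\p{h}) = p(\p{u})\prod_{i=1}^{d}(1+t\mu_i),
\]
where $\mu_i\in\R$ are real numbers, namely the negatives of the reciprocals of the eigenvalues of $\p{u}$ relative to $\p{h}$, with appropriate handling when the degree drops. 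Hence $\phi(t):=F(\p{u}+t\p{h}) = -\ln p(\p{u}) - \sum_i \ln(1+t\mu_i)$.

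Next I would differentiate at $t=0$:
\[
\phi''(0)=\sum_i \mu_i^2,\qquad \phi'''(0)=-2\sum_i\mu_i^3 .
\]
The self-concordance inequality $|\phi'''(0)|\le 2\,\phi''(0)^{3/2}$ then reduces to $\big|\sum_i\mu_i^3\big|\le \big(\sum_i\mu_i^2\big)^{3/2}$. This holds because $\|\mu\|_3\le\|\mu\|_2$. Convexity of $F$ comes from $\phi''\ge0$. The barrier property also needs $F(\p{x}_k)\to\infty$ as $\p{x}_k$ approaches the boundary, and this holds because $p$ vanishes on the boundary while staying positive on the interior component. So $F$ is a s.c.\ barrier, and its closedness follows from the same blow-up.

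For logarithmic homogeneity, degree-$d$ homogeneity gives $p(\tau\p{x})=\tau^d p(\p{x})$. Hence $F(\tau\p{x})=F(\p{x})-d\ln\tau$ for $\tau>0$, which is exactly $d$-logarithmic homogeneity. The standard consequences $\langle \nabla F(\p{x}),\p{x}\rangle=-d$ and $\nabla^2F(\p{x})\p{x}=-\nabla F(\p{x})$ then give $\langle\nabla F(\p{x}),[\nabla^2F(\p{x})]^{-1}\nabla F(\p{x})\rangle=d$. This yields barrier parameter $\vartheta=d$, and alternatively one can bound $\phi'(0)^2=(\sum_i\mu_i)^2\le d\sum_i\mu_i^2$ directly by Cauchy--Schwarz.

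The main obstacle is the first step: showing that the interior of $\Lambda_+(p,\p{e})$ is convex and that $p$ is hyperbolic with respect to every interior point $\p{u}$. Without that, the real factorization along an arbitrary line through $\p{u}$ is unavailable. I would prove it following G\aa rding. For $\p{u}$ in the interior, consider the bivariate polynomial $s,t\mapsto p(\p{x}+s\p{e}+t\p{u})$. Use the real-rootedness in $\p{e}$ together with a continuity/deformation argument: the roots cannot leave the real line without passing through a double root while the leading coefficient stays nonzero. This shows that $t\mapsto p(\p{x}+t\p{u})$ is real-rooted and that the cone is the component of $\{p>0\}$ containing $\p{e}$. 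Convexity of the cone then follows because $\p{u}+\p{v}$ can be tested in direction $\p{u}$.
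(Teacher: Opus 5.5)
The paper gives no proof of this statement; it is quoted from G\"uler. Your argument is the standard one, essentially G\"uler's own. You restrict $F=-\ln p$ to a line through an interior point $\p{u}$, and use hyperbolicity in direction $\p{u}$ (G\aa rding) to factor $p(\p{u}+t\p{h})=p(\p{u})\prod_i(1+t\mu_i)$ with real $\mu_i$. This gives $D^kF(\p{u})[\p{h},\ldots,\p{h}]=(-1)^k(k-1)!\sum_i\mu_i^k$, and you conclude with $\|\mu\|_3\le\|\mu\|_2$ and $(\sum_i\mu_i)^2\le d\sum_i\mu_i^2$. Logarithmic homogeneity is immediate from homogeneity of $p$. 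Granting G\aa rding's theorem, the proof is complete and correct.

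Two small clean-ups. First, the $\mu_i$ are just the eigenvalues of $\p{h}$ with respect to the direction $\p{u}$, i.e.\ the roots of $s\mapsto p(\p{h}-s\p{u})$. The identity $p(\p{u}+t\p{h})=t^d p(\p{h}+t^{-1}\p{u})$ then gives the factorization with no separate degree-drop case. Describing them through eigenvalues of $\p{u}$ relative to $\p{h}$ is awkward, since $\p{h}$ need not be a hyperbolicity direction. Second, the identity $\langle\nabla F,[\nabla^2F]^{-1}\nabla F\rangle=d$ presupposes a nonsingular Hessian. That fails whenever the cone contains a line, e.g.\ $p(\p{x})=x_1^2$ on $\R^2$, so your direct Cauchy--Schwarz bound is the version to keep.

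If you intend to prove G\aa rding's theorem rather than cite it, the mechanism in your last paragraph does not suffice. Knowing that roots can leave $\R$ only through a multiple root helps only if multiple roots are excluded, and they are not: they occur routinely (e.g.\ $t\mapsto p(t\p{e})=p(\p{e})t^d$). At a multiple root, a pair of real roots can split off into $\mathbb{C}\setminus\R$, and your sketch gives no reason this cannot happen.

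The standard argument deforms by an \emph{imaginary} shift instead. Fix $\p{x}$ and $\p{u}\in\inte\Lambda_+(p,\p{e})$. For $\alpha>0$, consider the roots in $t$ of $p(\p{x}+i\alpha\p{e}+t\p{u})$; this polynomial has exact degree $d$, since its leading coefficient is $p(\p{u})\neq 0$.
\begin{itemize}
\item None of these roots is real: a real root $t_0$ would make $s=i\alpha$ a non-real root of $s\mapsto p(\p{x}+t_0\p{u}+s\p{e})$, contradicting hyperbolicity in direction $\p{e}$.
\item As $\alpha\to\infty$, the rescaled roots $t/\alpha$ converge to the roots $-i/\lambda_j(\p{u})$ of $\tau\mapsto p(i\p{e}+\tau\p{u})=p(\p{e})\prod_j\bigl(i+\tau\lambda_j(\p{u})\bigr)$. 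These lie in the open lower half-plane because $\lambda_j(\p{u})>0$.
\item By continuity, all roots stay in the open lower half-plane for every $\alpha>0$.
\item Letting $\alpha\downarrow 0$ puts the roots of the real polynomial $t\mapsto p(\p{x}+t\p{u})$ in the closed lower half-plane. Conjugate symmetry of its roots then shows it is real-rooted.
\end{itemize}
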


After decades of successful implementation of interior-point algorithms for optimization over symmetric cones (see, for instance, \cite{toh1999sdpt3,sedumi,mosek,DDS}), there have been several recent efforts to create efficient software for handling other convex sets with available computationally efficient self-concordant (s.c.) barriers. Some modern interior-point software for solving convex optimization problems (beyond symmetric cones) using s.c.\ barriers are: a MATLAB-based software package Alfonso (\cite{alfonso}), a software package Hypatia in the Julia language (\cite{coey2022solving}), Clarabel,  available in either  Julia or Rust \cite{Clarabel_2024}, and a MATLAB-based software package DDS (\cite{DDS}). While Hypatia and Clarabel implement barrier-based algorithms for general convex cones, neither provides built-in support for user-defined hyperbolic polynomials. DDS remains the only interior-point solver with explicit HB programming support. Nevertheless, the rapid development of theoretical results on hyperbolic programming highlights the need for an efficient solver which can be used to verify many hypotheses numerically. In this direction, the authors of \cite{nagano2024projection} studied the problem of projecting a point onto an arbitrary hyperbolicity cone and proposed a Frank-Wolfe method for solving it. They compared their approach with DDS 2.2. However, because the straight-line program representation of polynomials was not yet fully supported in earlier versions of DDS, they were restricted to using the monomial-based implementation. This significantly limited the scalability of their experiments, forcing the benchmarking to be conducted only on relatively small instances.

A common computational misunderstanding is that, if the generalized Lax conjecture (see Section \ref{sec:SDP}) holds, then hyperbolic (HB) programming becomes unnecessary because every such problem could be reformulated and solved more efficiently as a semidefinite program (SDP). However, the Lax conjecture does not provide any guarantee on the size of the matrices in \eqref{eq:Lax}, and in fact, these matrices can be much larger than $m$. As a result, even if a HB program can in principle be represented as an SDP, the corresponding SDP may be prohibitively large for current state-of-the-art SDP solvers. This observation highlights the importance of developing dedicated HB programming solvers. Now that the theory and tools for interior-point algorithms are well-established, it is crucial to provide an efficient baseline solver for HB programming that can serve as a point of comparison with alternative approaches \cite{nagano2024projection, klingler2025homotopy}.

This work addresses these computational limitations by introducing algorithmic and structural innovations in DDS 3.0 that make large-scale hyperbolic programming more tractable in practice.
\subsection{Contributions}
\begin{itemize}
\item {\bf Efficient representation, evaluation, and differentiation of hyperbolic barriers:}
We introduce a systematic and efficient implementation for polynomials using straight-line programs (SLP), a compact representation which avoids explicit monomial enumeration. Within this framework, we derive and implement efficient procedures for computing both the gradient and the Hessian. In particular, we show that evaluating the gradient of the barrier function has the same computational complexity as evaluating the function. Moreover, by exploiting vectorized ``bulk" operations, we can evaluate the Hessian with the same order of ``vectorized" operations for evaluating the function.
\item {\bf A new corrector step for interior-point methods:}
We design and implement a novel corrector step within the primal-dual interior-point method in DDS. This step significantly improves the efficiency of the solver for the cones for which only the primal s.c.\ barrier is computationally available. This development enhances the robustness of DDS on challenging instances.
\item {\bf DDS 3.0 release with extended SLP support:} We release DDS 3.0, which incorporates the methodological and computational advances introduced in this work. The new version includes an extended package for handling hyperbolic polynomials represented via SLP, with efficient implementations for the first- and second-order derivative computations. We present extensive numerical results on the newly created benchmark library, to show the performance improvements over the previous version of DDS and other new algorithms such as the Frank-Wolfe algorithm designed in \cite{nagano2024projection}. 
\item {\bf Benchmark library of hyperbolic polynomials:}
Beyond elementary symmetric polynomials (ESP), which are common in the literature, we extend the SLP representation to a large family of hyperbolic polynomials. Several methods are used to create hyperbolic polynomials other than ESPs: using matroids, directional derivatives, products of linear forms, and compositions. We have created an extended benchmark library of hyperbolic programming instances.
\end{itemize}

%DDS handles optimization problems involving hyperbolic polynomials using the above s.c.\ barrier. A computational problem is that, currently, we do not have a practical, efficient, algorithm to evaluate the LF conjugate of $-\ln(p(x))$. Therefore, DDS uses a primal-heavy version of the algorithm for these problems. 

\section{Different formats for Hyperbolic polynomials} \label{sec:poly-format}
Hyperbolic polynomials can be represented in three main forms, all of which are implemented in DDS. In this section, we describe these representations and explain how to define the variable \texttt{poly} in DDS, which encodes the polynomial structure. Among these representations, the straight-line program (SLP) format offers the highest computational efficiency and scalability, making it particularly suitable for a wide range of hyperbolic programming instances.

\noindent{\bf Using monomials:} This representation defines a polynomial by listing all the monomials. In DDS, if $p(\p{x})$ is a polynomial of $m$ variables, then \tx{poly} is an $k$-by-$(m+1)$ matrix, where $k$ is the number of monomials. In the $j$th row, the first $m$ entries are the powers of the $m$ variables in the monomial, and the last entry is the coefficient of the monomial in $p(\p{x})$. For example, if $p(\p{x})=x_1^2-x_2^2-x_3^2$, then
\begin{eqnarray*}
\tx{poly}:=\left[\begin{array}{cccc} 2&0&0&1 \\ 0&2&0&-1 \\ 0&0&2&-1 
\end{array}	\right].
\end{eqnarray*}
In many applications, the above matrix is very sparse. Moreover, for many hyperbolic polynomials, the number of monomials grows very fast versus the number of variables and make the representation computationally intractable. 

\noindent{\bf Straight-line program data structure.}  
A \emph{straight-line program} (SLP) is an algebraic structure used to represent composite functions
as a sequence of arithmetic operations.
Each intermediate quantity is computed from previously computed ones, forming a \emph{computational graph}.
This concept is closely related to those used in automatic differentiation
and neural network computations \cite{griewank2008evaluating, goodfellow2016deep}.
The structure can be visualized as a directed graph, where nodes correspond to arithmetic
operations and edges represent data dependencies.
SLPs are widely used in symbolic computation, automatic differentiation,
and optimization algorithms due to their efficiency in evaluating both the function
and its derivatives through forward and reverse propagation.

In our implementation, each node of the straight-line program (SLP) is stored as an element of a MATLAB \texttt{struct} array. This format makes the representation both human-readable and easily extensible compared to the matrix-based encoding that was used in DDS 2.2 \cite{DDS}.  

Each node is represented by four fields:
\[
\texttt{poly}(k).\texttt{id}, \qquad 
\texttt{poly}(k).\texttt{op}, \qquad 
\texttt{poly}(k).\texttt{in}, \qquad 
\texttt{poly}(k).\texttt{coef}.
\]

\begin{itemize}
\item \texttt{id}: Actual identifier of the node in the computational graph that is used to reference to the node. 
  \item \texttt{op}: a string, identifying the operation, chosen from
  \[
  \{\texttt{`add'}, \texttt{`sub'}, \texttt{`mul'}, \texttt{`pow'}\}.
  \]
  \item \texttt{in}: a vector of two integers giving the indices of the input nodes. For example, \texttt{[2,3]} means that this operation takes as inputs the values computed at nodes $f_2$ and $f_3$. Constants are represented by index $0$, corresponding to $f_0=1$.
  \item \texttt{coef}: a scalar (or a row vector, in the batched case) multiplying the result of the operation. This allows scaling factors to be encoded directly at each node.
\end{itemize}

Formally, assume $p(\p{x})$ has $m$ variables. We define
\[
f_0 := 1, \qquad f_i := x_i \quad (i \in \{1,\ldots,m\}).
\]
Each subsequent node $f_\ell$ is generated by an operation of the form
\[
f_\ell \;=\; \alpha \,\big(f_{i} \;\Box\; f_{j}\big),
\]
where $\Box$ is one of $\{+,-,\times,\text{power}\}$ and $\alpha$ is the coefficient stored in \texttt{coef}. The structure entry for $f_\ell$ contains the following fields:
\[
\{\texttt{id}=\ell, \texttt{op}=\Box,\,\texttt{in}=[i,j],\,\texttt{coef}=\alpha\}.
\]

\medskip
\noindent{\bf Example.}  
For $p(\p{x})=x_1^2 - x_2^2 - x_3^2$, the struct array is:
\[
\begin{array}{ll}
\texttt{poly}(1): & \{\texttt{id} = 4,\,\texttt{op}=\text{'mul'},\, \texttt{in}=[1,1],\, \texttt{coef}=1\}, \\[0.3em]
\texttt{poly}(2): & \{\texttt{id} = 5,\,\texttt{op}=\text{'mul'},\, \texttt{in}=[2,2],\, \texttt{coef}=-1\}, \\[0.3em]
\texttt{poly}(3): & \{\texttt{id} = 6,\,\texttt{op}=\text{'mul'},\, \texttt{in}=[3,3],\, \texttt{coef}=-1\}, \\[0.3em]
\texttt{poly}(4): & \{\texttt{id} = 7,\,\texttt{op}=\text{'add'},\, \texttt{in}=[4,5],\, \texttt{coef}=1\}, \\[0.3em]
\texttt{poly}(5): & \{\texttt{id} = 8,\,\texttt{op}=\text{'add'},\, \texttt{in}=[6,7],\, \texttt{coef}=1\}.
\end{array}
\]

This representation corresponds exactly to the computation graph shown in Figure~\ref{fig:slp-struct-dag}.  
\begin{figure}
\centering
\includegraphics[scale=.7]{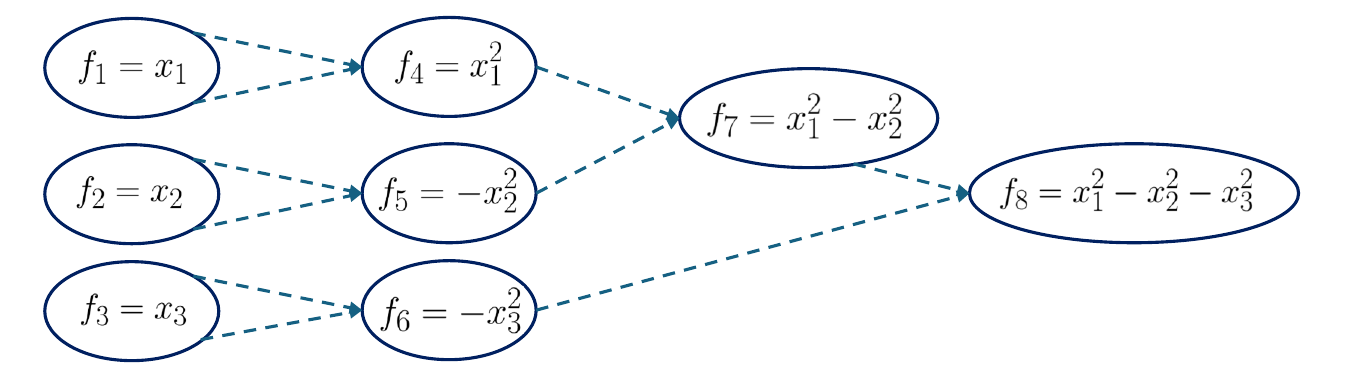}
\caption{The SLP graph structure for the function $p(\p{x})=x_1^2 - x_2^2 - x_3^2$ \label{fig:slp-struct-dag}}
\end{figure}

 Unlike the matrix form in DDS 2.2 \cite{DDS}, which encoded operations using numerical opcodes (11, 22, 33, 44), the struct form directly names operations, improving readability and reducing the risk of indexing errors. The representation is extensible and new nodes can be added as new fields without changing the global structure. The struct-based format integrates naturally with batched evaluation.

\noindent{\bf Determinantal representation:} In this case, if possible, the polynomial $p(\p{x})$ is written as
\begin{eqnarray} \label{eq:hyper-3}
p(\p{x})=\det(H_0+x_1H_1+x_2 H_2+\cdots +x_mH_m),
\end{eqnarray}
where $H_i$, $i \in \{0,1,\ldots,m\}$ are symmetric matrices.  This format can be input into DDS by defining
\[
\tx{poly}:=[\tx{sm2vec}(H_0) \ \ \tx{sm2vec}(H_1) \ \cdots \ \tx{sm2vec}(H_m)].
\]
For example, for $p(\p{x})=x_1^2-x_2^2-x_3^2$, we can have
\[
H_0:=\begin{pmatrix} 0&0\\0&0\end{pmatrix}, \ \ 
H_1:=\begin{pmatrix} 1&0\\0&1\end{pmatrix}, \ \
H_2:=\begin{pmatrix} 1&0\\0&-1\end{pmatrix}, \ \
H_3:=\begin{pmatrix} 0&1\\1&0\end{pmatrix}. \ \
\]

\section{Derivative Evaluations via Straight-Line Programs (SLP)}
Even though the straight-line program (SLP) representation allows us to express polynomials efficiently, a robust and high-performance interior-point solver also requires efficient computation of their gradients and Hessians. The improvements introduced in DDS 3.0 rely heavily on these optimized derivative implementations, which form the computational backbone of the solver. In this section, we describe the techniques used to compute gradients and Hessians within the SLP framework and discuss how vectorized and bulk operations are leveraged to achieve significant speedups.
\subsection*{Gradient Computation}
For a function $f(\p{x})$ represented as a SLP, the gradient \( \nabla f(\p{x}) \in \mathbb{R}^m \) can be computed using \emph{reverse-mode automatic differentiation (AD)}, as described in Algorithm~\ref{alg:slp_grad}. Let \( f_k \) denote each intermediate value. We define:
\[
\text{adjoint}(f_k) := \frac{\partial f}{\partial f_k}
\]
and initialize:
\[
\text{adjoint}(f_{\text{output}}) := 1.
\]
Then, we traverse the SLP in reverse, applying the chain rule at each operation:
\begin{itemize}
  \item Addition: \( f_k = \alpha(f_i + f_j) \Rightarrow \frac{\partial f}{\partial f_i} \mathrel{+}= \alpha \cdot \text{adjoint}(f_k), \quad \frac{\partial f}{\partial f_j} \mathrel{+}= \alpha \cdot \text{adjoint}(f_k) \)
  \item Multiplication: \( f_k = \alpha(f_i \cdot f_j) \Rightarrow \frac{\partial f}{\partial f_i} \mathrel{+}= \alpha f_j \cdot \text{adjoint}(f_k), \quad \frac{\partial f}{\partial f_j} \mathrel{+}= \alpha f_i \cdot \text{adjoint}(f_k) \)
  \item Power: \( f_k = \alpha f_i^p \Rightarrow \frac{\partial f}{\partial f_i} \mathrel{+}= \alpha p f_i^{p-1} \cdot \text{adjoint}(f_k) \)
\end{itemize}
where $a \mathrel{+}= b$ means add $b$ to $a$ and store the results back in $a$. The final gradient is:
\[
\nabla f(\p{x}) = \left[ \frac{\partial f}{\partial x_1}, \ldots, \frac{\partial f}{\partial x_m} \right]^\top
= \left[ \text{adjoint}(f_1), \ldots, \text{adjoint}(f_m) \right]^\top.
\]

\begin{algorithm}[H]
\caption{Gradient computation via reverse-mode AD for an SLP}
\label{alg:slp_grad}
\KwIn{
\begin{tabular}{ll}
$\p{x} \in \mathbb{R}^m$: & input vector \\
$\{(\alpha_k, i_k, j_k, \mathrm{op}_k)\}_{k=1}^N$: & SLP description \\
%& $\alpha_k$: coefficient for operation $k$ \\
%& $i_k, j_k$: operand indices \\
%& $\mathrm{op}_k \in \{\texttt{add}, \texttt{sub}, \texttt{mul}, \texttt{pow}\}$
\end{tabular}
}
% \KwOut{$\nabla f(\mathbf{x}) \in \mathbb{R}^n$}

\BlankLine
\textbf{Forward pass:} \\
Initialize $f_0 \gets 1$, $f_t \gets x_t$ for $t=1,\dots,n$\;
\For{$k \gets 1$ \KwTo $N$}{Calculate the value of $f_{n+k}$  }

\BlankLine
\textbf{Reverse pass:} \\
Initialize $d_t \gets 0$ for all $t$; set $d_{m+N} \gets 1$\;
\For{$k \gets N$ {\bf down to}  $1$}{
    Let $u \gets i_k$, $v \gets j_k$, $\delta \gets d_{n+k}$\;
    \uIf{$\mathrm{op}_k = \texttt{add}$}{
        $d_u \gets d_u + \alpha_k \cdot \delta$; \quad
        $d_v \gets d_v + \alpha_k \cdot \delta$
    }
    \uElseIf{$\mathrm{op}_k = \texttt{sub}$}{
        $d_u \gets d_u + \alpha_k \cdot \delta$; \quad
        $d_v \gets d_v - \alpha_k \cdot \delta$
    }
    \uElseIf{$\mathrm{op}_k = \texttt{mul}$}{
        $d_u \gets d_u + \alpha_k \cdot f_v \cdot \delta$; \quad
        $d_v \gets d_v + \alpha_k \cdot f_u \cdot \delta$
    }
    \uElseIf{$\mathrm{op}_k = \texttt{pow}$}{
        $p \gets v$; \quad
        $d_u \gets d_u + \alpha_k \cdot p \cdot f_u^{\,p-1} \cdot \delta$
    }
}

\BlankLine
\Return{$\nabla f(\p{x}) = (d_1, d_2, \dots, d_m)^\top$}
\end{algorithm}

\subsection*{Hessian Computation}

Let \( f : \mathbb{R}^m \to \mathbb{R} \) be a polynomial represented by a straight-line program (SLP). The Hessian \( H = \nabla^2 f(\p{x}) \in \mathbb{R}^{m \times m} \) is computed using a \emph{reverse-over-forward} strategy, which combines forward-mode and reverse-mode automatic differentiation (AD), as detailed in Algorithm \ref{alg:slp_hess}.

First, the gradient of every intermediate variable \( f_k \) with respect to the input \( x \) is computed and stored. This is done using forward-mode differentiation as the SLP is evaluated. Specifically, for each node \( f_k \), we compute $
\mathbf{g}_k := (\nabla f_k)^\top \in \mathbb{R}^{1 \times m}$.
These rows are assembled into a matrix:
$
\texttt{MG} \in \mathbb{R}^{N \times m},
$
where \( N \) is the number of total nodes in the SLP. The matrix \texttt{MG} captures the derivative of all intermediate values with respect to all inputs.

\begin{algorithm}[H]
\caption{Batched Hessian Computation via Reverse-over-Forward AD for an SLP \label{alg:slp_hess}}
\KwIn{SLP representation \(\mathcal{P}\) with $N$ nodes, input \(x \in \mathbb{R}^m\)}
\KwOut{Hessian matrix \(H \in \mathbb{R}^{m \times m}\)}

\BlankLine
\textbf{Forward sweep (primal + forward-mode):}
\begin{enumerate}
    \item Evaluate all intermediate values \(f_k\) in the SLP at \(x\) (primal evaluation).
    \item Simultaneously compute the gradient of all intermediate values:
    \[
    \texttt{MG}(k,:) \gets (\nabla f_k(\p{x}))^\top \quad k=1,\dots,N
    \]
\end{enumerate}

\BlankLine
\textbf{Batched reverse sweep:}
\begin{enumerate}
    \item Initialize \(\texttt{DF} \gets \mathbf{0}_{N \times m}\) (adjoints for all intermediates, one column per seed).
    \item Set \(\texttt{DF}(\text{output index},:) \gets I_n\) (seeds for reverse-mode).
    \item For each operation \(k = N, N-1, \dots, 1\) in reverse order:
    \begin{enumerate}
        \item Read operands \((i,j,op)\) and coefficient \(\alpha\) from \(\mathcal{P}\).
        \item Update \(\texttt{DF}(i,:)\) and \(\texttt{DF}(j,:)\) according to the derivative rules of the operation \(op\), using batched multiplication with \(\texttt{MG}\) for forward derivatives.
    \end{enumerate}
\end{enumerate}

\BlankLine
\textbf{Extract Hessian:}
\[
H \gets \texttt{DF}(1:m,:)
\]
\[
H \gets \frac{H + H^\top}{2} \quad \text{(symmetrize for numerical stability)}.
\]

\end{algorithm}

To compute the Hessian \( H \), we use the identity:
\[
\frac{\partial^2 f}{\partial x_i \partial x_k} = \frac{\partial}{\partial x_i} \left( \frac{\partial f}{\partial x_k} \right).
\]
To perform this column by column, for each \( k = 1, \dots, m \), we treat the directional derivative \( \partial f / \partial x_k \) as a scalar function \( g_k(x) \), and apply reverse-mode AD to compute:
\[
H_{:,k} = \nabla g_k(x) = \nabla \left( \frac{\partial f}{\partial x_k} \right).
\]

\subsubsection*{Batched Reverse Pass.}  
To avoid repeated reverse-mode sweeps, we use a batched implementation in DDS 3.0. Instead of seeding reverse-mode AD with one direction at a time, we pass the entire matrix \( \texttt{MG} \in \mathbb{R}^{N \times m} \) and perform reverse-mode backpropagation simultaneously over all directions:
\[
H = \left[ \nabla \left( \frac{\partial f}{\partial x_1} \right) \mid \cdots \mid \nabla \left( \frac{\partial f}{\partial x_m} \right) \right].
\]
This yields the full Hessian in one pass. 
In the column-by-column method, the reverse-mode sweep is repeated \( n \) times, each with a different seed vector. 
A batched implementation would accept the full seed matrix \( \texttt{MG} \) as input and perform a single reverse-mode sweep to compute all Hessian columns simultaneously.
This avoids redundant passes over the same computation graph and can significantly reduce runtime when \( n \) is large. The computational complexity of the gradient and Hessian calculations are summarized in the following theorem. 

\begin{theorem}
Assume that $p(\p{x}): \R^m \rightarrow \R$ is a polynomial with a straight-line program with a graph on $N$ nodes. Then, the gradient of $p$ can be calculated in $\mathcal O(N)$ operations. Moreover, the Hessian of $p$ can be calculated in $\mathcal O(mN)$ operations or equivalently  $\mathcal O(N)$ vector operations on $m$-dimensional vectors.  
\end{theorem}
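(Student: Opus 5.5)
The plan is a direct operation count over Algorithms~\ref{alg:slp_grad} and~\ref{alg:slp_hess}. The baseline assumption is that each node performs one of the four operations in $\{\texttt{add},\texttt{sub},\texttt{mul},\texttt{pow}\}$ on two inputs, with a stored coefficient. For \texttt{pow} we take the exponent $p$ to be a fixed constant; otherwise we charge $\mathcal O(\log p)$ via repeated squaring. Alternatively, we may count $f_i^{p-1}$ as one elementary operation that can be cached during the forward pass. Under these conventions, every node costs $\mathcal O(1)$ scalar operations to evaluate. Hence the forward pass of Algorithm~\ref{alg:slp_grad} costs $\mathcal O(N)$.

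\textbf{Gradient.} First, I will check correctness of the reverse pass. I will argue by induction on the reverse topological order that when node $k$ is processed, $d_k$ already equals $\partial p/\partial f_k$. Here $p$ is viewed as a function of the intermediate $f_k$, with all later nodes treated as functions of it. This holds because every node using $f_k$ has a larger index, so it has already been processed and has already added its chain-rule contribution. Second, each reverse step performs at most two updates, and each update takes $\mathcal O(1)$ multiplications and additions. Both use the values $f_u,f_v$ stored from the forward pass. Summing over the $N$ nodes gives $\mathcal O(N)$, and reading off $d_1,\dots,d_m$ adds $\mathcal O(m)\le \mathcal O(N)$, since the input nodes are themselves among the $N$ nodes.

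\textbf{Hessian.} Forward mode computes the row $\texttt{MG}(k,:)=\nabla f_k^\top$ from the rows of the inputs. The update for \texttt{add}/\texttt{sub} is $\alpha(\mathbf g_i\pm\mathbf g_j)$. For \texttt{mul} it is $\alpha(f_j\mathbf g_i+f_i\mathbf g_j)$, and for \texttt{pow} it is $\alpha p f_i^{p-1}\mathbf g_i$. Each is $\mathcal O(1)$ operations on $m$-vectors, so the forward sweep costs $\mathcal O(N)$ vector operations. The reverse sweep differentiates the gradient program itself: the adjoint of $f_k$ is now a row vector $\texttt{DF}(k,:)=\nabla_x(\partial p/\partial f_k)$. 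Differentiating the scalar reverse-mode rules with the product rule gives the updates. For \texttt{mul} we get $\texttt{DF}(i,:)\mathrel{+}=\alpha(f_j\texttt{DF}(k,:)+\delta_k\mathbf g_j)$, symmetrically for $j$, where $\delta_k=d_k$ is the scalar adjoint from the gradient pass. For \texttt{pow} we get $\texttt{DF}(i,:)\mathrel{+}=\alpha p\bigl(f_i^{p-1}\texttt{DF}(k,:)+(p-1)f_i^{p-2}\delta_k\mathbf g_i\bigr)$. For add/sub the update is linear in $\texttt{DF}(k,:)$. Each update is again $\mathcal O(1)$ vector operations of length $m$. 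With the same induction as in the gradient case, $\texttt{DF}(t,:)=\nabla(\partial p/\partial x_t)^\top$ for the input nodes $t=1,\dots,m$, which are exactly the rows of the Hessian. The total is $\mathcal O(N)$ vector operations, i.e.\ $\mathcal O(mN)$ scalar operations.

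\textbf{Main obstacle.} The main obstacle is making the batched reverse sweep rigorous. Two points need care. First, the sweep must also carry the scalar adjoints $\delta_k$ in the same pass, so it has to be organized as a joint reverse pass over pairs $(\delta_k,\texttt{DF}(k,:))$. Second, correctness requires a second-order chain rule, which I would establish by applying the gradient argument to the extended SLP that computes $\partial p/\partial x_t$. That extended program has $\mathcal O(N)$ nodes, so the gradient bound together with the batching over $m$ seeds yields the stated $\mathcal O(mN)$.
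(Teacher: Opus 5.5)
Your proposal is correct and takes essentially the same route as the paper: one forward sweep and one reverse sweep over the $N$ nodes. The gradient costs $\mathcal O(1)$ scalar operations per node, and the batched Hessian sweep costs $\mathcal O(1)$ operations on $m$-vectors per node, giving $\mathcal O(N)$ and $\mathcal O(mN)$ respectively. The paper's proof is only this cost count and takes the correctness of Algorithms~\ref{alg:slp_grad} and~\ref{alg:slp_hess} for granted. Your induction on reverse topological order, the explicit second-order update rules (including the $\delta_k\mathbf g_j$ terms), and the caveat on the cost of \texttt{pow} are therefore added rigor rather than a different argument.
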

\begin{proof}
By inspecting Algorithm~\ref{alg:slp_grad}, we observe that the gradient computation consists of one forward and one reverse sweep through the straight-line program, each involving $N$ nodes. Since each node performs a constant number of scalar operations, the total computational cost is $O(N)$.

For the Hessian computation in Algorithm~\ref{alg:slp_hess}, using the batched reverse-mode differentiation, we again perform two passes of length $N$. In this case, however, the intermediate adjoint values are vectors of dimension $m$, corresponding to the number of input variables. Therefore, each step involves $\mathcal O(m)$ arithmetic operations. Consequently, the overall complexity of the Hessian computation is $\mathcal O(N)$ vector operations of size $m$, which amounts to $\mathcal O(mN)$ operations.
\end{proof}
Modern numerical environments such as MATLAB and Julia provide highly optimized and parallelized routines for vector and matrix operations. When implemented using bulk operations, the gradient and Hessian evaluations in the SLP framework achieve nearly the same computational complexity, and often the same practical runtime, as the function evaluation itself.

\section{Hyperbolic Polynomials as Straight Line Programs} \label{SLP}

\subsection{Recursive Formula for Elementary Symmetric Polynomials}

Let \( \p{x} = (x_1, x_2, \ldots, x_n)^\top \in \mathbb{R}^n \), and let \( e_k^{(i)} \) denote the degree-\( k \) elementary symmetric polynomial of the first \( i \) variables \( x_1, \ldots, x_i \). That is,
\[
e_k^{(i)} = \sum_{1 \leq j_1 < j_2 < \dots < j_k \leq i} x_{j_1} x_{j_2} \cdots x_{j_k},
\]
with the conventions:
\[
e_0^{(i)} := 1 \quad \text{for all } i \geq 0, \qquad \text{and} \qquad e_k^{(i)} := 0 \quad \text{if } k > i \text{ or } k < 0.
\]
The recursive algorithm constructs each \( e_k^{(i)} \) using two previously computed values,
\[
e_k^{(i)} \;:=\; e_k^{(i-1)} \;+\; x_i \cdot e_{k-1}^{(i-1)}.
\]
To efficiently implement this recursion in a straight-line program (SLP) representation,
we employ a dynamic programming approach that stores the index of each computed \( e_{k'}^{(i')} \) in a lookup table (e.g., a two-dimensional map or dictionary) so that every intermediate value is computed exactly once and can be referenced in constant time.

The algorithm proceeds column-by-column in \(i\) (corresponding to the number of variables considered), and within each column, row-by-row in \(k\) (corresponding to the polynomial degree).  
For each pair \((i,k)\), the SLP node for \( e_k^{(i)} \) is created by:
(1) retrieving the index of \( e_k^{(i-1)} \) from the table,  
(2) retrieving the index of \( e_{k-1}^{(i-1)} \),  
(3) creating a multiplication node for \( x_i \cdot e_{k-1}^{(i-1)} \), and  
(4) creating an addition node to sum it with \( e_k^{(i-1)} \).  
The index of the new node is then stored back in the table for later reuse.
Since each of the \(n\) columns has at most \(k\) rows, the total number of SLP operations (additions and multiplications) required to compute \( e_k^{(n)} \) is \(\mathcal{O}(nk)\).
This is exponentially more efficient than enumerating all \( \binom{n}{k} \) monomials directly.
Moreover, the explicit indexing structure in the dynamic programming table ensures that the SLP representation is compact and reuses intermediate results, reducing both computation time and memory usage.

\subsection{Hyperbolic polynomials from matriods} Some interesting classes of hyperbolic polynomials can be constructed using \emph{matroids}. Let us first define a matroid.
\begin{definition}
A \emph{ground set} $E$ and a collection of \emph{independent sets } $\mathcal I \subseteq 2^E$ form a \emph{matroid} $M=(E,\mathcal I)$ if all of the following hold:
\begin{itemize}
\item $\emptyset \in \mathcal I$, 
\item if $A \in \mathcal I$ and $B \subset A$, then $B \in \mathcal I$, 
\item if $A,B \in \mathcal I$, and $|B| > |A|$, then there exists $b \in B \setminus A$ such that $A \cup \{b\} \in \mathcal I$. 
\end{itemize}
\end{definition}
The independent sets with maximum cardinality are called the \emph{bases} of the matroid, we denote the set of bases of the matroid by $\mathcal B$. We can also assign a \emph{rank} function $r_M: 2^E \rightarrow \mathbb Z_+$ as  $r_M(A) := \max \{|B| : B \subseteq A, B \in \mathcal I\}$.
Naturally, the rank of a matroid is the cardinality of any basis of it.
The \emph{basis generating polynomial} of a matroid is defined as
\begin{eqnarray} \label{eq:hyper-6}
p_M(\p{x}) := \sum_{B \in \mathcal B}  \  \prod_{i \in B}  x_i.
\end{eqnarray}
A class of hyperbolic polynomials are the basis generating polynomials of certain matroids with the \emph{half-plane property} \cite{choe2004homogeneous}. A polynomial has the half-plane property if it is nonvanishing whenever all the variables lie in the open right half-plane. We state it as the following lemma:
\begin{lemma}
Assume that $M$ is a matroid with the half-plane property. Then its basis generating polynomial is hyperbolic in any direction in the positive orthant. 
\end{lemma}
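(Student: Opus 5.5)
The plan is to verify the two conditions in the definition of hyperbolicity for $p_M$ and a fixed $\p{e}$ in the open positive orthant. Homogeneity is immediate: every basis $B\in\mathcal B$ has cardinality $r=r_M(E)$, so every monomial in \eqref{eq:hyper-6} has degree $r$. Positivity is also immediate: $p_M(\p{e})=\sum_{B\in\mathcal B}\prod_{i\in B}e_i>0$, because $\mathcal B\neq\emptyset$ (the empty set is independent, so a maximal independent set exists) and every term is a product of positive numbers. (If $\p{e}$ is allowed on the boundary of the orthant, positivity can fail, so I will read ``positive orthant'' as the open orthant.)

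The substantive step is the real-rootedness of $t\mapsto p_M(\p{x}-t\p{e})$ for every $\p{x}\in\R^m$. I will use the half-plane property as follows. Fix $\p{x}\in\R^m$ and suppose, for contradiction, that $p_M(\p{x}-t\p{e})=0$ for some $t=a+bi$ with $b\neq 0$. By homogeneity of degree $r$ and $t\neq0$, the polynomial also vanishes at $\p{x}/t-\p{e}$. I therefore look for a complex scalar $s$ such that $s(\p{x}-t\p{e})$ has all coordinates with positive real part.

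First I rewrite $\p{x}-t\p{e}=(\p{x}-a\p{e})-ib\p{e}$, and multiply by $s=i\,\mathrm{sgn}(b)$ (or by $-i$, depending on the sign convention). The $j$th coordinate then becomes $|b|e_j+i\,\mathrm{sgn}(b)(x_j-ae_j)$. Its real part is $|b|e_j>0$, so every coordinate lies in the open right half-plane. Second, homogeneity gives $p_M(s(\p{x}-t\p{e}))=s^r p_M(\p{x}-t\p{e})=0$, which contradicts the half-plane property. Hence every root of $p_M(\p{x}-t\p{e})$ is real. Since the leading coefficient in $t$ is $(-1)^r p_M(\p{e})\neq 0$, the univariate polynomial has exact degree $r$, and all $r$ of its roots are real.

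The main obstacle is only in setting up the rotation: I must choose the scalar so that the imaginary part of $t$ turns into a strictly positive real part on every coordinate. This is exactly where positivity of all $e_j$ is needed, and it is what makes the argument work for every direction in the open orthant rather than just $\p{e}=\mathbf 1$. Apart from this choice, the argument is direct. I would also cite \cite{choe2004homogeneous}, where this equivalence between the half-plane property of a homogeneous polynomial and hyperbolicity with respect to every positive direction is established.
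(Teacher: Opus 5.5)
Your proof is correct. The paper itself gives no argument for this lemma: it defines the half-plane property, states the lemma, and attributes it to \cite{choe2004homogeneous}. Your proposal supplies the standard self-contained argument behind that citation.
\begin{itemize}
\item Equal cardinality of bases gives homogeneity of degree $r$.
\item Nonemptiness of $\mathcal B$ together with $e_j>0$ gives $p_M(\p{e})>0$.
\item Suppose $t=a+bi$ with $b\neq 0$ were a root of $t\mapsto p_M(\p{x}-t\p{e})$. Multiplying by $s=i\,\mathrm{sgn}(b)$ and using degree-$r$ homogeneity, $p_M$ would vanish at the point with coordinates $|b|e_j+i\,\mathrm{sgn}(b)(x_j-ae_j)$. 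All of these lie in the open right half-plane, contradicting the half-plane property exactly as the paper defines it.
\item Your check that the leading coefficient is $(-1)^r p_M(\p{e})\neq 0$ fixes the degree at $r$, so there are exactly $r$ real roots.
\end{itemize}

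Compared with the bare citation, which places the lemma inside the general two-way equivalence between the half-plane property of a homogeneous polynomial and hyperbolicity in every positive direction, your route proves only the direction needed. In exchange it is short, uses nothing beyond the definitions, and shows exactly where $e_j>0$ enters.

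Two cosmetic points. The sentence about $\p{x}/t-\p{e}$ is never used. The hedge ``(or by $-i$, depending on the sign convention)'' can be dropped, since you verified directly that $s=i\,\mathrm{sgn}(b)$ works.

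Reading ``positive orthant'' as the open orthant is the right choice for your argument. If you also want boundary directions, any $\p{e}\geq 0$ with $p_M(\p{e})>0$ works as well. Such a point lies in the closed hyperbolicity cone and has nonzero $p_M$-value, so it lies in the open hyperbolicity cone, and every point of that cone is itself a direction of hyperbolicity.
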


Several classes of matroids have been proven to have the half-plane property \cite{choe2004homogeneous,branden2007polynomials,AminiBranden2018}. As the first example, we introduce the Vamos matroid. The ground set has size 8 (can be seen as 8 vertices of the cube) and the rank of the matroid is 4. All subsets of size 4 are independent except 5 of them.  Vamos matroid has the half-plane property \cite{wagner2009criterion}. The basis generating polynomial is:
\begin{eqnarray*} \label{eq:hyper-8}
 p_V(\p{x}):= 
 e_4^8(x_1,\ldots,x_8) - x_1x_2x_3x_4-x_1x_2x_5x_6-x_1x_2x_6x_8-x_3x_4x_5x_6-x_5x_6x_7x_8,
\end{eqnarray*}	
where $e_d^m(x_1,\ldots,x_m)$  is the elementary  symmetric polynomial of degree $d$ with $m$ variables. Note that elementary symmetric polynomials are hyperbolic in the direction of the vector of all ones.  Some extensions of the Vamos matroid also satisfy the half-plane property \cite{burton2014real}. These matroids give the following Vamos-like basis generating polynomials:
\begin{eqnarray} \label{eq:hyper-9}
&& p^m_{VL}(\p{x}):=  \nonumber \\
&& e_4^{2m}(x_1,\ldots,x_{2m}) - \left(\sum_{i=2}^m x_1x_2x_{2k-1}x_{2k} \right) - \left(\sum_{i=2}^{m-1} x_{2k-1}x_{2k}x_{2k+1}x_{2k+2}\right).
\end{eqnarray}
These polynomials have an interesting property of being counterexamples to one generalization of the	Lax conjecture \cite{branden2011obstructions,burton2014real}. 
%Explicitly, there is no power $k$ and symmetric matrices $H_2, \ldots, H_{2m}$ such that  
%\begin{eqnarray} \label{eq:hyper-13}
%(p_{VL}(x))^k = \det(x_1I+x_2H_2+\cdots+x_{2m}H_{2m}).
%\end{eqnarray}
We construct a straight-line program  for the Vamos-like polynomial by starting from the elementary symmetric polynomial \(e_4^{(2m)}(x_1,\ldots,x_{2m})\). We showed that this polynomial can be constructed in \(\mathcal{O}(m)\) operations using the recursive formula. The Vamos-like polynomial $p^m_{VL}$
is then obtained by subtracting two structured collections of quartic monomials. In the SLP, we first create and reuse the pairwise products \(y_k := x_{2k-1}x_{2k}\) for \(k=1,\ldots,m\). The first correction is accumulated as \(\sum_{k=2}^{m} (y_1\,y_k)\) and the second as \(\sum_{k=2}^{m-1} (y_k\,y_{k+1})\), each via a linear number of multiply/add nodes. Finally, these two sums are subtracted from the output node of the \(e_4^{(2m)}\) block. This SLP reuses the shared intermediates \(y_k\) and thus encodes the Vamos-like polynomial in \(\mathcal{O}(m)\) additional operations beyond the \(e_4^{(2m)}\) block.

Graphic matroids also have the half-plane property \cite{choe2004homogeneous}. However, the hyperbolicity cones arising
from graphic matroids are isomorphic to positive semidefinite cones. This can be
proved using a classical result that the characteristic polynomial
of the bases of graphic matroids is the determinant of the Laplacian
(with a row and a column removed) of the underlying graph \cite{branden2014hyperbolicity}.
Consider a graph $G=(V,E)$ and let $\mathcal T$ be the set of all spanning trees of $G$. Then the generating polynomial defined as 
\begin{eqnarray} \label{eq:hyper-10}
T_G(\p{x}) := \sum_{T \in \mathcal T}  \  \prod_{e \in T}  x_e
\end{eqnarray}
is a hyperbolic polynomial. Several matroid operations preserve the half-plane property as proved in \cite{choe2004homogeneous}, including taking minors,
duals, and direct sums.

\subsection{Derivatives of hyperbolic polynomials}
Hyperbolicity of a polynomial is preserved under directional derivative operation (see \cite{renegar2006hyperbolic}):
\begin{theorem}  \label{thm:hyper-1}
Let $p(\p{x})\in \R[x_1,\ldots,x_m]$ be hyperbolic in direction $\p{e}$ and of degree at least 2. Then the polynomial 
\begin{eqnarray} \label{eq:hyper-11}
p'_{\p{e}}(\p{x}) := (\nabla p)(x)[\p{e}]
\end{eqnarray}
is also hyperbolic in direction $\p{e}$. Moreover, 
\begin{eqnarray} \label{eq:hyper-12}
\Lambda(p, \p{e})  \subseteq \Lambda (p'_{\p{e}}, \p{e}).
\end{eqnarray}
\end{theorem}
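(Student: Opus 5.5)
The plan is to reduce everything to univariate polynomials along lines parallel to $\p{e}$ and use Rolle's theorem. Fix $\p{x}\in\R^m$. Set $q(t):=p(\p{x}+t\p{e})$. By homogeneity, $p(\p{x}+t\p{e}) = p(\p{e})\prod_{i=1}^d (t+\lambda_i(\p{x}))$. This is a real-rooted polynomial of exact degree $d$ with leading coefficient $p(\p{e})>0$. The key identity is
\[
p'_{\p{e}}(\p{x}+t\p{e}) = (\nabla p)(\p{x}+t\p{e})[\p{e}] = \frac{d}{dt}\, q(t),
\]
which follows from the chain rule. So the restriction of $p'_{\p{e}}$ to the line through $\p{x}$ in direction $\p{e}$ is just $q'(t)$.

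First I will check the two defining conditions for $p'_{\p{e}}$. It is homogeneous of degree $d-1\geq 1$, since it is a derivative of a degree-$d$ form. Euler's identity gives $p'_{\p{e}}(\p{e}) = d\,p(\p{e})>0$, which also shows it is nonzero. For real-rootedness, $q$ has $d$ real roots counted with multiplicity. Rolle's theorem places a root of $q'$ strictly between consecutive distinct roots, and a root of multiplicity $k$ of $q$ is a root of multiplicity $k-1$ of $q'$. Counting gives $d-1$ real roots of $q'$, which has degree $d-1$, so $q'$ is real-rooted. The polynomial $p'_{\p{e}}(\p{x}-t\p{e})$ is $q'(-t)$, so it is real-rooted for every $\p{x}$, and $p'_{\p{e}}$ is hyperbolic in direction $\p{e}$.

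For the inclusion, the same counting yields interlacing: the eigenvalues $\mu_1\ge\cdots\ge\mu_{d-1}$ of $\p{x}$ with respect to $p'_{\p{e}}$ satisfy $\lambda_i(\p{x})\ge \mu_i\ge\lambda_{i+1}(\p{x})$. In particular $\mu_{d-1}\ge\lambda_d(\p{x})$. Hence if $\lambda(\p{x})\ge 0$ then all $\mu_i\ge 0$, which gives $\Lambda_+(p,\p{e})\subseteq\Lambda_+(p'_{\p{e}},\p{e})$.

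The main point needing care is the root counting with multiplicities, which is what yields both real-rootedness and interlacing. I would handle it by writing the distinct roots $r_1>\dots>r_s$ with multiplicities $k_j$, where $\sum k_j=d$. This gives $\sum(k_j-1)+(s-1)=d-1$ roots of $q'$: the multiple roots plus one Rolle root in each gap. That number equals the degree, so these are all the roots of $q'$, and they are ordered as claimed. If $d-1=0$ the statement is vacuous, which is why degree at least $2$ is assumed.
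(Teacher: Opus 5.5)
Your proof is correct and is essentially the standard argument: the paper does not prove this theorem itself but cites Renegar. His proof likewise restricts $p$ to the lines $t\mapsto p(\p{x}+t\p{e})$, differentiates along $\p{e}$, and uses Rolle's theorem and root interlacing, exactly as you do. Your inequality $\mu_{d-1}\ge\lambda_d(\p{x})$ also gives the inclusion for the open cones, since $\lambda_d>0$ implies $\mu_{d-1}>0$, so it covers whichever reading of $\Lambda(\cdot,\p{e})$ the statement intends.
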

\noindent DDS 3.0 package comes with an internal function \\
\tx{slp\_directional\_derivative(poly,e)} \\
where given a polynomial in SLP and a direction $\p{e}$, returns the directional derivative as a SLP. 
\subsection{Product of linear forms}
Assume that $p(\p{x}):= l_1(\p{x}) \cdots l_\ell(\p{x})$, where $l_1(\p{x}),\ldots,l_\ell(\p{x})$ are linear forms. Then, $p$ is hyperbolic in the direction of any vector $\p{e}$ such that $p(\p{e}) \neq 0$. As an example, consider $e_m^m(x_1,\ldots,x_m) = x_1 \cdots x_m$. 
Recursively applying Theorem \ref{thm:hyper-1} to such polynomials $p$ leads to many hyperbolic polynomials, including all elementary symmetric polynomials. For some properties
of hyperbolicity cones of elementary symmetric polynomials, see \cite{Zinchenko2008}. For some preliminary computational experiments on such hyperbolic programming problems, see \cite{Myklebust2015}. 
For the polynomials constructed by the products of linear forms, it is more efficient to use the straight-line program. 
For a polynomial $p(\p{x}):= l_1(\p{x}) \cdots l_\ell(\p{x})$, let us define an $\ell$-by-$m$ matrix $L$ where the $j$th row contains the coefficients of $l_j$. In DDS,  we have created a function \\
 \tx{poly = lin2slp(M,d)} \\
\noindent which returns a polynomial in SLP form  as the product of linear forms defined by the rows of $M$. For example, if we want the polynomial $p(\p{x}):= (2x_1-x_3)(x_1-3x_2+4x_3)$, then our $M$ is defined as 
\begin{eqnarray} \label{eq:hyper-14}
M:= \left[\begin{array}{ccc}
2&0&-1 \\ 1&-3&4 
\end{array} \right].
\end{eqnarray}

\subsection{Composition of hyperbolic polynomials} \label{subsec:comp}

The construction of hyperbolic polynomials using composition is based on the following result from \cite{bauschke2001hyperbolic}. 

\begin{theorem}[\cite{bauschke2001hyperbolic}-Theorem 3.1]
Suppose \(q: \R^d \rightarrow \R\) is a homogeneous symmetric polynomial of degree \(k\) on 
\(\mathbb{R}^d\), hyperbolic with respect to 
\(\p{e} := (1,1,\ldots,1)^\top \in \mathbb{R}^d\), with characteristic map \(\mu\). Let  \(p: \R^m \rightarrow \R\) be a hyperbolic polynomial of degree \(d\) that is hyperbolic with
respect to a direction \(\hat{\p{e}} \in \R^m \), with characteristic map $\lambda$.
Then the composition \(q \circ \lambda\) is a hyperbolic polynomial of degree \(n\) 
with respect to \(d\), and its characteristic map is \(\mu \circ \lambda\).
\end{theorem}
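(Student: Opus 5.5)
The plan is to verify the definition of hyperbolicity directly for the function $r(\p{x}) := q(\lambda(\p{x}))$. The target is that $r$ is a homogeneous polynomial of degree $k$ (the degree of $q$), hyperbolic in the direction $\hat{\p{e}}$, with characteristic map $\mu\circ\lambda$. I would use three structural facts about the characteristic map $\lambda$ of $p$:
\begin{itemize}
\item[(i)] the shift rule $\lambda(\p{x}-t\hat{\p{e}}) = \lambda(\p{x}) - t\p{e}$, where $\p{e}=(1,\ldots,1)^\top\in\R^d$;
\item[(ii)] the scaling rule: $\lambda(s\p{x}) = s\lambda(\p{x})$ for $s\ge 0$, while for $s<0$ the vector $\lambda(s\p{x})$ is $s\lambda(\p{x})$ with its entries listed in reverse order;
\item[(iii)] $\lambda(\hat{\p{e}}) = \p{e}$.
\end{itemize}
All three follow from the factorization $p(\p{x}-t\hat{\p{e}}) = p(\hat{\p{e}})\prod_{i=1}^d(\lambda_i(\p{x})-t)$. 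This identity holds because $p$ is homogeneous of degree $d$, so the coefficient of $t^d$ is $(-1)^d p(\hat{\p{e}})\neq 0$, and the roots in $t$ are by definition the $\lambda_i(\p{x})$.

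\textbf{Step 1: $r$ is a polynomial.} Expanding the factorization, the elementary symmetric functions $e_j(\lambda(\p{x}))$ equal, up to sign and division by $p(\hat{\p{e}})$, the coefficients of $t^{d-j}$ in $p(\p{x}-t\hat{\p{e}})$. These coefficients are polynomials in $\p{x}$. By the fundamental theorem of symmetric polynomials, the symmetric polynomial $q$ can be written as $q = Q(e_1,\ldots,e_d)$ for some polynomial $Q$. Hence $r(\p{x}) = Q(e_1(\lambda(\p{x})),\ldots,e_d(\lambda(\p{x})))$ is a polynomial in $\p{x}$.

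\textbf{Step 2: homogeneity.} By (ii) and the symmetry of $q$, for every real $s$ we get $r(s\p{x}) = q(s\lambda(\p{x})) = s^k r(\p{x})$, since the reordering for $s<0$ does not affect $q$. A polynomial satisfying this for all real $s$ is homogeneous of degree $k$. Also $r(\hat{\p{e}}) = q(\p{e}) > 0$ by (iii) and the hyperbolicity of $q$.

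\textbf{Step 3: real-rootedness and the characteristic map.} By (i), $r(\p{x}-t\hat{\p{e}}) = q(\lambda(\p{x}) - t\p{e})$. Since $q$ is hyperbolic with respect to $\p{e}$, this univariate polynomial in $t$ has only real roots, namely the entries of $\mu(\lambda(\p{x}))$. It also has exact degree $k$, with leading coefficient $(-1)^k q(\p{e})\neq 0$. Therefore $r$ is hyperbolic in direction $\hat{\p{e}}$ and its characteristic map is $\mu\circ\lambda$. (The ``$n$'' and ``with respect to $d$'' in the statement I read as the degree $k$ and the direction $\hat{\p{e}}$.)

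The step I expect to be the main obstacle is Step 1. The map $\lambda$ is only continuous, not polynomial, so one cannot simply compose. Symmetry of $q$ is exactly what lets us pass through the elementary symmetric functions of the roots, which are polynomial in $\p{x}$. A related subtlety is the reordering of eigenvalues for negative scalars in Step 2, which symmetry of $q$ again resolves.
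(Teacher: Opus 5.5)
Your proof is correct. The paper gives no proof of its own and simply cites Bauschke--G\"uler--Lewis--Sendov. Your route is essentially the standard argument for this result: polynomiality of $q\circ\lambda$ via the fundamental theorem of symmetric polynomials applied to the coefficients of $t\mapsto p(\p{x}-t\hat{\p{e}})$, followed by the shift identity $\lambda(\p{x}-t\hat{\p{e}})=\lambda(\p{x})-t\p{e}$, which transfers hyperbolicity and identifies the characteristic map as $\mu\circ\lambda$. Your reading of the statement's typos as degree $k$ and direction $\hat{\p{e}}$ is also the intended one.
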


In general, calculating the characteristic map of a polynomial is not computationally efficient, however, there are cases that it can be done efficiently. For example, as discussed above, $p(\p{x}):= \ell_1(\p{x}) \cdots \ell_k(\p{x})$, where $\ell_1(\p{x}),\ldots,\ell_k(\p{x})$ are linear forms, is hyperbolic in any direction $\p{e}$ where $p(\p{e})>0$. It can be seen that
\[
\lambda_i(\p{x}) = \frac{\ell_i(\p{x})}{\ell_i(\p{e})}, \ \ \ \ i\in\{1,\ldots,k\}.
\]
\section{Domain-Driven solver and a new corrector step}
Even though DDS is a predictor-corrector interior-point code that heavily relies on the theory of duality \cite{karimi2020primal,karimi_status_arxiv}, for the problems such as HB programming and quantum relative entropy (QRE) programming \cite{karimi2025efficient} where the dual barrier and dual cone are not efficiently characterized, a primal heavy version of the algorithm is being used.  In this section, we show the details of the new corrector step that improved the performance of robustness of the primal heavy algorithm in DDS 3.0. 

A convex optimization problem $(P)$ is in the Domain-Driven form if it can be written as 
\begin{eqnarray} \label{main-p}
(P) \ \ \ \inf _{\p{x}} \{\langle \p{c},\p{x} \rangle :  A\p{x} \in D\},
\end{eqnarray}
where $\p{x} \mapsto A\p{x} : \R^n \rightarrow \R^m$ is a linear embedding, $A$ and $\p{c} \in \mathbb \R^n$ are given, and $D$ is
defined as the closure of the domain of a given $\vartheta$-\emph{self-concordant (s.c.) barrier} $\Phi$ \cite{interior-book}. The Legendre-Fenchel (LF) conjugate $\Phi_*$ of $\Phi$ is the dual barrier with the domain equal to the interior of a cone $D_*$ defined as:
 \begin{eqnarray} \label{eq:leg-conj-2} 
D_*:=\{\p{y}:  \langle \p{y} , \p{h} \rangle  \leq 0, \ \ \forall \p{h} \in \textup{rec}(D)\}, 
\end{eqnarray}
where $\textup{rec}(D)$ is the recession cone of $D$.
Let us fix an absolute constant $\xi > 1$ and define the initial points:
\begin{eqnarray} \label{starting-points-copy-2}
\p{z}^0 := \text{any vector in  $\inte D$}, \ \ \p{y}^0 :=  \Phi'(\p{z}^0), \ \ y_{\tau,0} := -\langle \p{y}^0, \p{z}^0 \rangle -\xi \vartheta.
\end{eqnarray} 
%Note that in the above definitions, we implicitly assume $z^0_\tau=1$, which is with out loss of generality by using a proper scaling for $z^0$. 
Then, it is proved in \cite{karimi2020primal} that the system 
\begin{eqnarray} \label{trans-dd-path-1-copy-2}
\begin{array}{rcl}
&(a)&  A \p{x} + \frac{1}{\tau} \p{z}^0 \in \inte D, \ \ \tau > 0,  \\
&(b)& A^\top \p{y} -A^\top \p{y}^0 = -(\tau-1) \p{c}, \ \ \p{y} \in \inte D_*,  \\
&(c)& \p{y}=\frac{\mu  }{\tau}  \Phi' \left (  A \p{x} + \frac{1}{\tau} \p{z}^0 \right), \\
%&(d)& \frac{\mu \xi}{1+\tau} = \frac{-y_{\tau,0} - \langle c, (1+\tau) x \rangle - \langle y, A x + \frac{1}{1+\tau} z^0 \rangle }{\vartheta}.
&(d)& \langle \p{c},\p{x} \rangle +  \frac{1}{\tau} \langle \p{y}, A\p{x}+\frac{1}{\tau} \p{z}^0 \rangle =  -\frac{\vartheta \xi \mu}{\tau^2} +\frac{ -y_{\tau,0}}{\tau},
\end{array}
\end{eqnarray}
has a unique solution $(\p{x}(\mu), \tau(\mu), \p{y}(\mu))$ for every $\mu > 0$. The solution set of this system for all $\mu >0$ defines our \emph{infeasible-start primal-dual central path}. Here is the definition of a corrector step:

\noindent{\bf Corrector step at a point $(\p{x},\p{y},\tau)$:} Find a point closer to the solution of \eqref{trans-dd-path-1-copy-2} for $\mu$ calculated from \eqref{trans-dd-path-1-copy-2}-(d) by substituting $(\p{x},\p{y},\tau)$. 

In DDS 2.2, the corrector step was implemented as a modified version of the main corrector step in the code and exhibited occasional numerical stability issues.  To address this, we designed a new corrector step in DDS 3.0, derived directly from the Newton system associated with the primal–dual central path equations.
%Since the corrector step in the primal-heavy version of DDS required the gradient of the dual function $\Phi_*$, it is not robust for problems such as QRE and Hyperbolic programming. Here, we come up with another corrector step that relies just on the primal s.c.\ barrier. Consider the Domain-Driven central path
%\begin{eqnarray} \label{trans-dd-path}
%\begin{array}{rcl}
%&(a)&  A x + \frac{1}{\tau} z^0 \in \inte D, \ \ \tau > 0,  \\
%&(b)& A^\top y -A^\top y^0 = -(\tau-1) c, \ \ y \in \inte D_*,  \\
%&(c)& y=\frac{\mu  }{\tau}  \Phi' \left (  A x + \frac{1}{\tau} z^0 \right), \\
%%&(d)& \frac{\mu \xi}{1+\tau} = \frac{-y_{\tau,0} - \langle c, (1+\tau) x \rangle - \langle y, A x + \frac{1}{1+\tau} z^0 \rangle }{\vartheta}.
%&(d)& \langle c,x \rangle +  \frac{1}{\tau} \langle y, Ax+\frac{1}{\tau} z^0 \rangle =  -\frac{\vartheta \xi \mu}{\tau^2} +\frac{ -y_{\tau,0}}{\tau},
%\end{array}
%\end{eqnarray}
%where $y^0$ and $z^0$ are arbitrary starting points and $\mu$ is the parameter of the path. 
For getting more efficient formulas, we define a new variable $\bar{\p{x}} := \tau \p{x}$. Note that by using some reformulations and using \eqref{trans-dd-path-1-copy-2}-(a), we can write \eqref{trans-dd-path-1-copy-2}-(d) in a linear form as:
\[
\langle \p{y},\p{z}^0 \rangle +\tau y_{\tau,0}+ \langle A^\top \p{y}^0+\p{c}, \bar{\p{x}} \rangle  = -\xi \vartheta \mu.
\]
Consider that after the predictor step, we are at a point $(\hat{\p{x}}, \hat \tau, \hat{\p{y}})$ with parameter $\mu$. The goal for the corrector step is finding a new point $(x,\tau,y)$ that satisfies 
\begin{eqnarray} \label{trans-dd-path-2}
\begin{array}{cl}

& A^\top \p{y} -A^\top \p{y}^0 +(\tau-1) \p{c}=0, \\
&\tau \p{y}-\mu   \Phi' \left (  \frac{A\bar{\p{x}}+\p{z}^0}{\tau} \right) = 0, \\
%&(d)& \frac{\mu \xi}{1+\tau} = \frac{-y_{\tau,0} - \langle c, (1+\tau) x \rangle - \langle y, A x + \frac{1}{1+\tau} z^0 \rangle }{\vartheta}.
&\langle \p{y},\p{z}^0 \rangle +\tau y_{\tau,0}+ \langle A^\top \p{y}^0+\p{c}, \bar{\p{x}} \rangle  = -\xi \vartheta \mu.
\end{array}
\end{eqnarray}
We apply the Newton method to the above system, with $(\hat{\p{x}}, \hat \tau, \hat{\p{y}})$ as the starting point. The search direction $(d\bar{\p{x}}, d\tau, d\p{y})$ must satisfy:
\begin{eqnarray} \label{trans-dd-path-3}
\begin{array}{cl}

& A^\top d\p{y} + d \tau \p{c}=0, \\
&d\tau \p{y}+\tau d\p{y} -\mu   \Phi''(\p{z})  \left (  \frac{\tau A d\bar{\p{x}}-d\tau (A\bar{\p{x}}+\p{z}^0)}{\tau^2} \right) = R, \\
%&(d)& \frac{\mu \xi}{1+\tau} = \frac{-y_{\tau,0} - \langle c, (1+\tau) x \rangle - \langle y, A x + \frac{1}{1+\tau} z^0 \rangle }{\vartheta}.
&\langle d\p{y},\p{z}^0 \rangle +d\tau y_{\tau,0}+ \langle A^\top \p{y}^0+c, d\bar{\p{x}} \rangle  = 0,
\end{array}
\end{eqnarray}
where we used $\p{z }= \frac{A\bar{\p{x}}+\p{z}^0}{\tau}$ and $R:=-\left(\hat \tau \hat{\p{y}}-\mu   \Phi' \left (  \frac{A\hat{\bar{\p{x}}}+\p{z}^0}{\hat\tau} \right)\right)$. The first and third equations are linear. We can write the second equation as
\begin{eqnarray} \label{eq:hyper-15}
d\tau \p{y}+\tau d\p{y} -\frac{\mu}{\tau} \Phi''(\p{z}) A d\bar{\p{x}} -\frac{d\tau}{\tau^2} \Phi''(\p{z}) (A\bar{\p{x}}+\p{z}^0) = R.
\end{eqnarray}

If we multiply both side of this equation from the left by $A^\top$ and use the $A^\top d\p{y} =- d \tau \p{c}$ equality, after re-ordering, we get
\[
\frac{\mu}{\tau} A^\top \Phi''(\p{z}) A d\bar{\p{x}} = d\tau \left[A^\top \p{y}-\tau\p{c} -  \frac{d\tau}{\tau^2} A^\top \Phi''(\p{z}) (A\bar{\p{x}}+\p{z}^0) \right] -A^\top R.
\]
By solving this positive definite linear system with matrix $A^\top \Phi''(\p{z}) A $, we calculate $d \bar{\p{x}}$ in terms of $d\tau$. Putting it back into  \eqref{eq:hyper-15} gives us a solution for $d\p{y}$ in terms of $d\tau$. To calculate $d\tau$, we substitute $d\bar{\p{x}}$ and $d\p{y}$ into the third equation of \eqref{trans-dd-path-3}. For the final step, to calculate $d\p{x}$ from $d\bar{\p{x}}$, note that
\[
\bar{\p{x}} = \tau \p{x}  \ \ \Rightarrow \ \ d\bar{\p{x}} = \tau d\p{x} + d\tau \p{x}.
\]
This calculated $(d\p{x},d\p{y},d\tau)$ corrector step performs more robustly for challenging HB programming and QRE programming instances in the DDS benchmark library.

\section{Numerical results for  Hyperbolic Polynomials}
The techniques designed here have been used to improve the performance of the newest version of the software package DDS (namely DDS 3.0), which can be downloaded from \cite{DDS3.0}. In addition to hyperbolic programming constraints which are the main improvements in version 3.0, DDS accepts every combination of the following function/set constraints: (1)  symmetric cones (LP, SOCP, and SDP); (2) quadratic constraints that are SOCP representable; (3) direct sums of an
arbitrary collection of 2-dimensional convex sets defined as the epigraphs of univariate convex
functions (including as special cases geometric programming and entropy programming); (4) generalized  Koecher (power) cone; (5) epigraphs of matrix norms (including
as a special case minimization of nuclear norm over a linear subspace); (6) vector relative entropy; and (7) epigraphs of quantum
entropy and quantum relative entropy. In this section, we present several numerical examples of running DDS 3.0 for HB programming. We performed computational experiments using the software MATLAB R2024a, on a 1.7 GHz 12th Gen Intel Core i7 personal computer with 32GB of memory. All the numerical results in this section are by using the default settings of DDS, including the tolerance of $tol=10^{-8}$.

The examples created and tested here are included in  a library of hyperbolic cone programming problems \cite{Karimi_Library_for_Modern}:\\
\href{https://github.com/mehdi-karimi-math/Library-for-Modern-Convex-Optimization/tree/main/Hyperbolic\%20Programming}{https://github.com/mehdi-karimi-math/Library-for-Modern-Convex-Optimization}

To create the tables of this section, first install DDS 3.0 \cite{DDS3.0} and then follow the instructions in \cite{Karimi_Library_for_Modern}. In general, to add a hyperbolic programming constraint to DDS, consider a constraint of the form
\begin{eqnarray} \label{eq:hyper-4}
% p(Ax+b) \leq 0, \ \  i \in \{1,\ldots,\ell\}. 
p(A\p{x}+\p{b}) \geq 0.
\end{eqnarray}
To input this constraint to DDS as the $k$th block, the variable \tx{cons} which contains the structure of each constraint is defined as follows:\\

\noindent \tx{cons\{k,1\}='HB'},\\
\noindent \tx{cons\{k,2\}=} number of variables in $p(\p{z})$. \\
\noindent \tx{cons\{k,3\}} is the \tx{poly} that can be given in one of the three formats of Section \ref{sec:poly-format}. \\
\noindent \tx{cons\{k,4\}} is the format of polynomial that can be $\tx{'monomial'}$, $\tx{'straight\_line'}$, or $\tx{'determinant'}$. \\
\noindent \tx{cons\{k,5\}} is the direction of hyperbolicity or a vector in the interior of the hyperbolicity cone. 

\begin{example}
Assume that we want to give constraint \eqref{eq:hyper-4} to DDS for $p(\p{x}):=x_1^2-x_2^2-x_3^2$, using the monomial format. Then, \tx{cons} part is defined as
\begin{eqnarray*} \label{eq:hyper-5}
\tx{cons\{k,1\}='HB'}, \ \ \tx{cons\{k,2\}}=[3], 
\tx{cons\{k,3\}}=\left[\begin{array}{cccc} 2&0&0&1 \\ 0&2&0&-1 \\ 0&0&2&-1 
\end{array}	\right], \\
\tx{cons\{k,4\}='monomial'}, \ \ \tx{cons\{k,5\}}=\left[\begin{array}{ccc} 1 & 0 &0\end{array}\right]^ \top.
\end{eqnarray*}
\end{example}

For a problem in the Domain-Driven form \eqref{main-p}, for every point $ \p{x} \in \R^n$ such that $A \p{x} \in D$ and every point $ \p{y} \in D_*$ such that $A^\top \p{y} = -\p{c}$, \emph{the duality gap} is defined as:
\begin{eqnarray} \label{eq:duality-gap-1}
\langle \p{c}, \p{x} \rangle + \delta_*(\p{y}|D),
\end{eqnarray}
where
\begin{eqnarray}  \label{eq:supp-fun-1}
\delta_*(\p{y}|D) := \sup\{ \langle \p{y},\p{z} \rangle :  \p{z} \in D\},  \ \ \ \text{(support function of $D$).}
\end{eqnarray}
Lemma 2.1 in \cite{karimi2020primal} shows that duality gap is well-defined and zero duality gap is a guarantee for optimality.
% Consider a conic optimization problem
%\begin{eqnarray}  \label{eq:duality-1}
%\begin{array}{rrl}
%\min  &   \langle c , x \rangle  &   \\
%\text{s.t.}  &   Ax+b    & \in K,  \\
%\end{array}
%\end{eqnarray}
%where $K \subset \R^m$ is a closed convex cone and $A$, $c$, and $b$ are matrices and vectors of proper size.  
For the special case of conic optimization, where $D=K-\p{b}$ for a closed  convex cone $K$ and a given vector $\p{b}$, we can show that the Domain-Driven duality gap is reduced to the well-known 
\[
\text{Duality Gap} =  \langle \p{c} , \p{x} \rangle +  \langle \p{b} , \p{y} \rangle.
\]
Note that for the hyperbolicty cones, we do not have an efficient oracle to check membership in the dual cone. However, the path-following algorithm in DDS and the properties of the self-concordant functions guarantee that the dual iterate $y$ lies in the dual cone. The default $tol$ in DDS is $10^{-8}$, which means that the algorithm stops when the duality gap, the primal feasibility, and the dual feasibility, are below this tolerance. In other words:
\[
 \text{\bf{DDS Stopping Criteria}}:  \max\{\text{primal feasibility, dual feasibility, duality gap}\}  \leq tol.
\]
 
\subsection{Minimizing entropy combined with HP programming}
In this section, we investigate an optimization problem of minimizing the entropy of a vector under both hyperbolic and linear inequality constraints. This benchmark highlights the efficiency and robustness of the proposed derivative computations and provides a direct comparison between DDS 3.0 and the previous version DDS 2.2, which uses our old matrix version of a SLP. The problem is formulated as follows:
\begin{eqnarray}  \label{eq:hyper-16}
\begin{array}{rrl}
\min  &   \text{entr}(\p{x})   &   \\
\text{s.t.}  &   A\p{x}+\p{b}    & \in \Lambda_+(p,\p{e})    \\
      &   \p{x}   & \geq \gamma \mathbf{1},
\end{array}
\end{eqnarray}
where $\mathbf{1}$ is the vector of all ones. In this example, $p(\p{x}) := e_k^n(\p{x})$ is the elementary symmetric polynomial. The matrix $A \in \R^{m \times n}$ is created randomly with $\pm 1$ entries and $\p{b}$ is the vector of all ones. $p(\p{x})$ is represented using the SLP as discussed in Section \ref{SLP}. The results are shown in Table \ref{table:HB3}. As can be seen, DDS 3.0 can handle HB polynomials with as many as $2.4e57$ monomials. The performance of DDS 2.2 is still acceptable for small size problems, but the running time grows much faster than DDS 3.0 as $n$ increases. 

\begin{table} [!ht] 
  \caption{Results for problems formulated as \eqref{eq:hyper-16}. Times are in seconds. $A$ is created randomly with $\{0,1\}$ entries and $b$ is the vector of all ones. For DDS 2.2, the old format of a SLP was used. }
  \label{table:HB3}
  \renewcommand*{\arraystretch}{1.3}
  \begin{tabular}{ |c | c| c | c | c |  c| c|c|}
    \hline
\multirow{2}{*}{$(n,k)$}  &    \multirow{2}{*}{size of $x$}   &  \multirow{2}{*}{\# of monomials}   &  \multirow{2}{*}{$\gamma$} & \multicolumn{2}{|c|}{DDS 2.2}  & \multicolumn{2}{|c|}{DDS 3.0} \\    \cline{5-8}  
&&&&  Iter & Time&   Iter & Time\\ \hline 
(20,5)  & 10  & 1.6e+04 & 0.5   & 13  & 9.97 &  12  &  0.84   \\ \hline
(20,10)  & 10  & 1.8e+05 & 0.5   & 14  & 24.44 &  13  &  1.41   \\ \hline
(20,15)  & 10  & 1.6e+04 & 0.5   & 15 & 41.76 &  14  &  2.40   \\ \hline
(50,10)  & 10  & 1.0e+10 & 0.5   & 14 & 459&  13  &  5.09   \\ \hline
(50,25)  & 10  & 1.3e+14 & 0.5   & 16  & 4224 &  14  &  9.63   \\ \hline
(50,30)  & 10  & 4.7e+13 & 0.5   &  \multicolumn{2}{|c|}{time$>5000$} &  15  &  11.18   \\ \hline
(100,10)  & 10  & 1.7e+13 & 0.5   &  \multicolumn{2}{|c|}{time$>5000$} &  13  &  9.88   \\ \hline
(100,20)  & 10  & 5.4e+20 & 0.5   &  \multicolumn{2}{|c|}{time$>5000$} &  14  &  20.20   \\ \hline
(100,30)  & 10  & 2.9e+25 & 0.5   &  \multicolumn{2}{|c|}{time$>5000$} &  15  &  30.74   \\ \hline
(200,10)  & 10  & 2.2e+16 & 0.5   &  \multicolumn{2}{|c|}{time$>5000$} &  13  &  25.80   \\ \hline
(200,20)  & 10  & 1.6e+27 & 0.5   & \multicolumn{2}{|c|}{time$>5000$} &  14  &  50.73   \\ \hline
(200,30)  & 10  & 4.1e+35 & 0.5   &  \multicolumn{2}{|c|}{time$>5000$} &  15  &  78.88   \\ \hline
(500,10)  & 10  & 2.5e+20 & 0.5   &  \multicolumn{2}{|c|}{time$>5000$} &  13  &  111.91   \\ \hline
(500,20)  & 10  & 2.7e+35 & 0.5   &  \multicolumn{2}{|c|}{time$>5000$} &  14  &  234.39   \\ \hline
(500,30)  & 10  & 1.4e+48 & 0.5   &  \multicolumn{2}{|c|}{time$>5000$} &  15  &  382.88   \\ \hline
(1000,10)  & 10  & 2.6e+23 & 0.5   &  \multicolumn{2}{|c|}{time$>5000$} &  13  &  549.40   \\ \hline
(1000,20)  & 10  & 3.4e+41 & 0.5   &  \multicolumn{2}{|c|}{time$>5000$} &  14  &  1172.23   \\ \hline
(1000,30)  & 10  & 2.4e+57 & 0.5   &  \multicolumn{2}{|c|}{time$>5000$} &  15  &  2053.65   \\ \hline
(2000,10)  & 10  & 2.8e+26 & 0.5   &  \multicolumn{2}{|c|}{time$>5000$} &  13  &  3513.78   \\ \hline

%(20,5)  & $20*100$  &   15504 &  0.5 & 22/\ 14.34 & 21/\ 3    \\ \hline
%(20,10)  & $20*100$  &   184756 &  0.5 & 25/\ 39 & 23/\ 4.5    \\ \hline
%(50,10)  & $50*100$  &   $\geq$ 1e10 &  0.5 & 26/\ 470 & 24/\ 12.2    \\ \hline
%(50,20)  & $50*100$  &   $\geq$ 4.7e13 &  0.5 & 29/\ 2158 & 26/\ 25    \\ \hline
%(100,10)  & $100*100$  &   $\geq$ 1.7e13 &  0.5 &  time $>$ 3000 & 23/\ 33.4  \\ \hline
%(100,20)  & $100*100$  &   $\geq$ 5.3e20 &  0.5 & time $>$ 3000 & 26/\ 60.2     \\ \hline
\end{tabular}
\end{table}

\subsection{Projection onto the hyperbolicity cone}
These problems are studied in \cite{nagano2024projection} using a first-order dual Frank-Wolfe (FW) approach. The problem can be formulated as
\begin{eqnarray}  \label{eq:hyper-17}
\begin{array}{rrl}
\min  &   t   &   \\
\text{s.t.}  &   \|\p{x}-\p{c}\|_2  & \leq t     \\
      &   \p{x}   & \in \Lambda_+(p,\p{e}),    
\end{array}
\end{eqnarray}
where $c$ is an arbitrary vector. The projection problem under consideration involves projecting a vector $c$ onto the hyperbolicity cone defined by the 2-norm. The numerical experiments of \cite{nagano2024projection} used elementary symmetric polynomials to model $p(\p{x})$. Benchmarking against DDS 2.2, they reported that ``for $(n, k) = (20, 10)$ and $(n, k) = (30, 15)$, DDS struggles to complete a single iteration.'' In contrast, by representing elementary symmetric polynomials through straight-line programs (SLPs), we have substantially improved the performance of DDS, enabling it to solve instances where the earlier implementation failed. For the FW, the divide-and-conquer version of the code designed for ESP was used. To define the FW gap used in our experiments, consider the optimization problem $\min_{\p{x} \in C} f(\p{x})$ for $C$ being a convex and compact set in $\R^n$ and $f$ being a convex function. Let $\p{x}_k$ be the iterates in the FW algorithm, and for each $\p{x}_k$, let us define $\p{s}_k \in \text{argmin}_{\p{x} \in C} \langle \nabla f(\p{x}_k) ,\p{x} \rangle$. The FW gap $G(x)$ can be defined as
\[
G(\p{x}) := \max_{\p{s} \in C} \langle -\nabla f(\p{\p{x}}), \p{s}-\p{x} \rangle.
\]
We can check that $G(\p{x})$ is non-negative for the iterates and is an upper bound for the optimality gap.

For the FW algorithm \cite{nagano2024projection} , in addition to $tol$ which is the bound on the FW gap, we can base the stopping criteria on the number of iterations and the running time. For our experiments, we set the limit on running time equal to $10^3$ seconds. For each value of $(n,k)$, 10 random vectors are chosen as $\p{c}$ in \eqref{eq:hyper-17} based on the normal distribution using the \texttt{randn} command in MATLAB, with average 0 and std $0.5$. The results are summarized in Table~\ref{table:HB4}. Following the presentation style of \cite{nagano2024projection}, we also report the infinity-norm distance between the solutions returned by FW and DDS. For FW, we used the option ``symmetric" for the polynomial type and we tested two tolerance levels: the default $\texttt{tol}=10^{-2}$, which converges rapidly but with limited accuracy, and the tighter $\texttt{tol}=10^{-4}$, which achieves higher precision at the cost of additional runtime. Based on our experiments, for a fixed value of \( n \), the FW algorithm performs efficiently and yields accurate results for small values of \( k \). However, its performance deteriorates as \( k \) increases. For \( n > 30 \), the FW algorithm frequently has numerical failure when \( k \geq 30 \) because of creating NaN values for the input of the MATLAB's \texttt{roots} function for calculating eigenvalues. Another notable observation is that the FW algorithm is more sensitive to the choice of the input vector \( \p{c} \). This sensitivity is reflected in the higher relative standard deviation of the running time compared to DDS, indicating less consistency across problem instances.

\begin{table} [!ht] 
  \caption{Results for the projection problems formulated as \eqref{eq:hyper-17} where $p(\p{x})$ is an elementary symmetric polynomial, comparing DDS 3.0 and the FW algorithm in \cite{nagano2024projection}. The numerical failures happen for the FW algorithm when MATLAB stops running because of creating NaN values for the input of the MATLAB's \texttt{roots} function for calculating eigenvalues. Times are in seconds. }
  \label{table:HB4}
  \renewcommand*{\arraystretch}{1.3}
  \small
  \begin{tabular}{ |l | c| c |c|c| c | c |  c| c|}
    \hline
\multirow{2}{*}{$(n,k)$}   & \multirow{2}{*}{ \# of }     &  \multicolumn{2}{|c|}{FW, tol = $1e-2$}& \multicolumn{2}{|c|}{FW, tol = $1e-4$}& \multicolumn{2}{|c|}{DDS 3.0} \\  \cline{3-8}     & mono &  FW time & $\|x-x_{DDS}\|_\infty$ &  FW time & $\|x-x_{DDS}\|_\infty$  &  Iter & time \\
\hline 
(20,5)  &   1.6e04   & 0.05$\pm$0.12 & 5.68e-02$\pm$2.54e-02  &  4.84$\pm$3.58 & 8.19e-04$\pm$6.50e-04  &  8.7$\pm$0.7& 1.22$\pm$0.70   \\ \hline
(20,10)  &   1.8e05   & 0.04$\pm$0.02 & 2.22e-02$\pm$1.37e-02  &  7.80$\pm$2.49 & 1.92e-04$\pm$5.84e-05  &  12.4$\pm$1.8& 1.90$\pm$1.01   \\ \hline
(20,15)  &   1.6e04   & 0.02$\pm$0.01 & 3.00e-02$\pm$1.20e-02  &  17.58$\pm$11.10 & 5.59e-04$\pm$2.27e-04  &  17.1$\pm$3.7& 2.66$\pm$1.50   \\ \hline
(50,10)  &   1.0e10   & 0.06$\pm$0.04 & 1.20e-01$\pm$2.69e-02  &  31.08$\pm$13.35 & 3.88e-04$\pm$2.19e-04  &  10.4$\pm$1.2& 3.45$\pm$0.26   \\ \hline
(50,25)  &   1.3e14   & 1.10$\pm$0.80 & 1.69e-02$\pm$2.59e-03  &  52.52$\pm$12.77 & 1.80e-04$\pm$2.79e-05  &  17.0$\pm$1.9& 10.95$\pm$1.05   \\ \hline
(50,30)  &   4.7e13   & 1000.17$\pm$0.11 & 5.26e-01$\pm$7.37e-02  &  1000.11$\pm$0.10 & 5.25e-01$\pm$7.36e-02  &  20.1$\pm$2.5& 14.77$\pm$1.92   \\ \hline
(50,40)  &   1.0e10    & \multicolumn{2}{|c|}{Numerical Failure}   &  \multicolumn{2}{|c|}{Numerical Failure}  &  22.0$\pm$2.2 & 19.95$\pm$2.11   \\ \hline
(100,10)  &   1.7e13   & 0.11$\pm$0.01 & 7.39e-02$\pm$3.46e-02  &  78.29$\pm$43.69 & 4.41e-03$\pm$3.43e-03  &  10.1$\pm$0.3& 10.34$\pm$6.00   \\ \hline
(100,20)  &   5.4e20   & 0.43$\pm$0.13 & 1.25e-01$\pm$3.94e-02  &  160.88$\pm$35.03 & 1.91e-04$\pm$1.23e-05  &  12.2$\pm$0.6& 18.39$\pm$1.23   \\ \hline
(100,30)  &   2.9e25    & \multicolumn{2}{|c|}{Numerical Failure}   &  \multicolumn{2}{|c|}{Numerical Failure}  &  15.3$\pm$0.8& 31.67$\pm$2.09   \\ \hline
(100,40)  &   1.4e28    & \multicolumn{2}{|c|}{Numerical Failure}   &  \multicolumn{2}{|c|}{Numerical Failure}  &  17.6$\pm$1.6 & 54.00$\pm$22.08   \\ \hline
(200,10)  &   2.2e16   & 0.56$\pm$0.01 & 2.11e-02$\pm$1.44e-02  &  41.38$\pm$60.64 & 1.61e-02$\pm$9.38e-03  &  10.5$\pm$0.5& 22.08$\pm$2.64   \\ \hline
(200,20)  &   1.6e27   & 0.65$\pm$0.21 & 2.00e-01$\pm$1.12e-01  &  519.47$\pm$174.12 & 1.15e-03$\pm$1.02e-03  &  11.7$\pm$1.4& 47.73$\pm$8.05   \\ \hline
(200,30)  &   4.1e35    & \multicolumn{2}{|c|}{Numerical Failure}   &  \multicolumn{2}{|c|}{Numerical Failure}  &  14.3$\pm$1.3& 113.81$\pm$63.24   \\ \hline
(200,40)  &   2.1e42    & \multicolumn{2}{|c|}{Numerical Failure}   &  \multicolumn{2}{|c|}{Numerical Failure}  &  15.3$\pm$1.2 & 159.43$\pm$133.19   \\ \hline
(500,10)  &   2.5e20   & 6.35$\pm$0.10 & 3.45e-03$\pm$2.60e-03  &  6.32$\pm$0.07 & 3.45e-03$\pm$2.60e-03  &  12.1$\pm$0.3& 104.29$\pm$4.11   \\ \hline
(500,20)  &   2.7e35   & 6.33$\pm$0.05 & 3.23e-02$\pm$1.28e-02  &  223.49$\pm$296.03 & 2.75e-02$\pm$5.67e-03  &  12.3$\pm$1.6& 338.69$\pm$276.17   \\ \hline
(500,30)  &   1.4e48    & \multicolumn{2}{|c|}{Numerical Failure}   &  \multicolumn{2}{|c|}{Numerical Failure}  &  13.4$\pm$1.5& 394.89$\pm$83.14   \\ \hline
(500,40)  &   2.2e59    & \multicolumn{2}{|c|}{Numerical Failure}   &  \multicolumn{2}{|c|}{Numerical Failure}  &  15.2$\pm$0.8 & 598.42$\pm$51.70   \\ \hline
(1000,10)  &   2.6e23   & 41.51$\pm$6.92 & 4.70e-04$\pm$2.38e-04  &  41.50$\pm$6.88 & 4.70e-04$\pm$2.38e-04  &  12.7$\pm$0.7& 533.23$\pm$26.94   \\ \hline
(1000,20)  &   3.4e41   & 43.67$\pm$0.25 & 6.36e-03$\pm$2.37e-03  &  43.61$\pm$0.13 & 6.36e-03$\pm$2.37e-03  &  12.7$\pm$0.5& 1110.61$\pm$40.03   \\ \hline
(1000,30)  &   2.4e57    & \multicolumn{2}{|c|}{Numerical Failure}   &  \multicolumn{2}{|c|}{Numerical Failure}  &  12.8$\pm$0.6& 2330.12$\pm$1218.36   \\ \hline
(1000,40)  &   5.6e71    & \multicolumn{2}{|c|}{Numerical Failure}   &  \multicolumn{2}{|c|}{Numerical Failure}  &  13.1$\pm$0.6 & 2991.57$\pm$292.00   \\ \hline

\end{tabular}
\end{table}
We have solved the projection problem for other hyperbolic polynomials as well. Since these polynomials are not coded for the FW algorithm in \cite{nagano2024projection}, we only report the results for DDS 3.0. Table \ref{table:HB5} shows the results of projecting a random vector into the cone created by Vamos-like polynomials. The running time averages and stds are shown for ten difference vectors created as above.  As can be seen, DDS 3.0 can solve the projection problem for Vamos-like polynomials with 3000 variables under $5000$ seconds. Table \ref{table:HB6} solves projection problem for HB polynomials created by composition, as explained in Subsection \ref{subsec:comp}. 

\begin{table} [!ht] 
  \caption{Results for problems formulated as \eqref{eq:hyper-17} where $p(\p{x})$ is an Vamos-like polynomial defined in \eqref{eq:hyper-9}. Times are in seconds. }
  \label{table:HB5}
  \renewcommand*{\arraystretch}{1.3}
  \begin{tabular}{ |c | c| c | c | c |  }
    \hline
$m$  &    \# of variables  &  \# of monomials   &  Iter  & Time \\ \hline 
10  & 20  &  4.8e3    &  8.6$\pm$0.5 & 0.70$\pm$0.07   \\ \hline
  25  & 50  &  2.3e5    &  8.9$\pm$0.3 & 1.79$\pm$0.20   \\ \hline
  50  & 100  &  3.9e6    &  10.3$\pm$0.5 & 4.38$\pm$0.38   \\ \hline
  100  & 200  &  6.4e7    &  11.1$\pm$0.7 &12.62$\pm$0.72   \\ \hline
  250  & 500  &  2.5e9    &  12.8$\pm$0.4 & 61.42$\pm$3.98   \\ \hline
  500  & 1000  &  4.1e10    &  14.0$\pm$0.0 & 284.12$\pm$31.02   \\ \hline
1000  & 2000  &  6.6e11    &  15.4$\pm$0.8 & 1669.93$\pm$70.88   \\ \hline
1500  & 3000  &  3.3e12    &  15.9$\pm$0.9 & 4874.51$\pm$263.40   \\ \hline

%5  & $10$  &   202 &  9/\ 0.5    \\ \hline
%10  & $20$  &   4827 &   10/\ 1   \\ \hline
%20  & $40$  &   91352 &   10/\ 1.5    \\ \hline
%50  & $100$  &   $\geq$ 3.9e6 &   11/\ 4.9    \\ \hline
%100  & $200$  &   $\geq$ 6.4e7 &   11/\ 12.1  \\ \hline
%500  & $1000$  &   $\geq$ 4.14e10 &   14/\ 298  \\ \hline
%1000  & $2000$  &   $\geq$ 6.6e11 &   14/\ 1442  \\ \hline
\end{tabular}
\end{table}

\begin{table} [!ht] 
  \caption{Results for problems formulated as \eqref{eq:hyper-17} where $p(\p{x})$ is a ESP composed with the characteristic map of a product of linear forms. $(n,k)$ are the parameters of the ESP, where $m$ is the number of variables. Times are in seconds. }
  \label{table:HB6}
  \renewcommand*{\arraystretch}{1.3}
  \begin{tabular}{ |c | c| c | c | c |  c| c|}
    \hline
  ESP   &   \multirow{2}{*}{\# of monomials}  & Linear form  &    \multicolumn{2}{|c|}{DDS 3.0} \\   \cline{4-5}
$(n,k)$  &     &     (terms,var)   & Ite & time \\ \hline 
(20,10)  &   1.8e+05    &    (20,5)  &  7 & 1.51  \\ \hline
(50,10)  &   1.0e+10    &    (50,5)  &  8 & 4.48  \\ \hline
(50,25)  &   1.3e+14    &    (50,5)  &  11 & 10.57  \\ \hline
(50,30)  &   4.7e+13    &    (50,10)  &  7 & 9.81  \\ \hline
(100,10)  &   1.7e+13    &    (100,10)  &  5 & 7.90  \\ \hline
(100,20)  &   5.4e+20    &    (100,10)  &  12 & 21.83  \\ \hline
(100,30)  &   2.9e+25    &    (100,10)  &  7 & 21.11  \\ \hline
(200,10)  &   2.2e+16    &    (200,10)  &  7 & 20.77  \\ \hline
(200,20)  &   1.6e+27    &    (200,10)  &  8 & 38.05  \\ \hline
(200,30)  &   4.1e+35    &    (200,10)  &  7 & 41.60  \\ \hline
(500,10)  &   2.5e+20    &    (500,10)  &  9 & 54.30  \\ \hline
(500,20)  &   2.7e+35    &    (500,10)  &  9 & 103.52  \\ \hline
(500,30)  &   1.4e+48    &    (500,20)  &  9 & 149.99  \\ \hline
(1000,10)  &   2.6e+23    &    (1000,10)  &  9 & 109.68  \\ \hline
(1000,20)  &   3.4e+41    &    (1000,10)  &  9 & 215.20  \\ \hline
(2000,10)  &   2.8e+26    &    (2000,10)  &  8 & 203.82  \\ \hline
(2000,20)  &   3.9e+47    &    (2000,10)  &  8 & 416.09  \\ \hline
(3000,10)  &   1.6e+28    &    (3000,10)  &  7 & 289.81  \\ \hline
(3000,20)  &   1.3e+51    &    (3000,10)  &  8 & 586.23  \\ \hline
(4000,10)  &   2.9e+29    &    (4000,10)  &  6 & 345.90  \\ \hline
(4000,20)  &   4.3e+53    &    (4000,10)  &  9 & 866.25  \\ \hline

\end{tabular}
\end{table}

\subsection{DDS versus SDP relaxations and homotopy method} \label{sec:SDP}
One of the most popular theoretical question about hyperbolic polynomials is the \emph{Generalized Lax Conjecture} which states that all hyperbolicity cones are spectrahedral \cite{helton2007linear,vinnikov2012lmi}. Mathematically, for every hyperbolic polynomial $p$ in the direction of $\p{e}$, there exist symmetric matrices $A_i$ such that 
\begin{eqnarray} \label{eq:Lax}
\Lambda_+(p,\p{e}) = \left\{\p{x} \in \R^m :  \sum_{i=1}^m x_i A_i \ \text{is positive semidefinite} \right\}.
\end{eqnarray}
Some special cases of the theorem has been proved \cite{branden2014hyperbolicity,amini2019}, however, the general problem remains wide open, with deep connections to convex optimization, real algebraic geometry, and combinatorics. 
At a first glance, this may suggest that any hyperbolic programming (HP) problem can be reformulated as a semidefinite program (SDP) and solved using existing efficient SDP solvers.
However, the generalized Lax conjecture does not provide any bound on the dimension of the matrix pencil, and in practice, this dimension can grow extremely fast.
Consequently, even though an SDP representation may exist theoretically, it can be computationally intractable, with matrix sizes far exceeding the number of decision variables. There are explicit theoretical results (\cite{RRSW2019,Oliveira2020}
indicating that for many HB programs, their most efficient SDP representation will be intractable.
It was also observed in \cite{DDS} that for homogeneous cones (which are a special subclass of HB cones) there are significant computational advantages in utilizing the algebraic structure of homogeneous cones while treating
nuclear-norm minimization problems rather than solely relying
on SDP formulations of nuclear-norm minimization problems (the latter less efficient approach is equivalent
to using an SDP representation of the underlying homogeneous cone).

This limitation can already be observed in the special case of hyperbolicity cones defined by elementary symmetric polynomials, for which spectrahedral representations are known \cite{branden2014hyperbolicity, saunderson2015polynomial}. In both formulations, the dimension of the lifted SDP grows in a way that even moderately sized instances are computationally infeasible for current SDP solvers.
A recent study \cite{klingler2025homotopy} introduced a homotopy-based method for solving general convex optimization problems, which can also be applied to HP problems. The authors compared their approach against the SDP relaxation derived from \cite{branden2014hyperbolicity} and analyzed the scalability of the corresponding matrix size. They showed that the dimension 
$s(n,k)$ of the matrix pencil satisfies
\begin{eqnarray*}
s(n,k) = \sum_{i=0}^{k-1}{n+1 \choose i}. i!  \geq \left(\frac{n+1}{k-1}\right)^{k-1}.(k-1)!, 
\end{eqnarray*}
where they used the lower bound ${n \choose k} \geq (n/i)^i$, which grows very fast. 

To compare DDS 3.0 with the homotopy method and SDP relaxation, we solve the exact problem given in  \cite{klingler2025homotopy} with DDS 3.0 and compare the results. The problem is 
\[
\begin{array}{rrl}
\max  &   f(\p{x})   &   \\
\text{s.t.}  &   \p{x} \in &  \Lambda(p,\p{1})\cap \left[\p{1}+\left\{\p{x} \in \R^n: \p{1}^\top \p{x} = 0 \right\}\right]   \\
\end{array}
\]
where $p$ is an ESP with parameter $(n,k)$ and $f(\p{x})$ is a linear function. 

The homotopy code is not publicly available, so we use the exact numerical values reported in Figure 5 of \cite{klingler2025homotopy}. In Figure~\ref{fig:homo}, we compare the performance of DDS with the homotopy method and the SDP relaxation approach. The results for DDS are averaged over 10 random creations of the objective function. It is evident from the figure that the growth rate of the running times differs significantly across the methods. While both the homotopy method and the SDP relaxation exhibit rapidly increasing computational costs, becoming intractable even for moderate problem sizes, the running time of DDS grows almost linearly on a logarithmic scale, showing its superior scalability.

\begin{figure} [h!]
\centering
\begin{subfigure}{.5\textwidth}
  \centering
  \includegraphics[width=\linewidth]{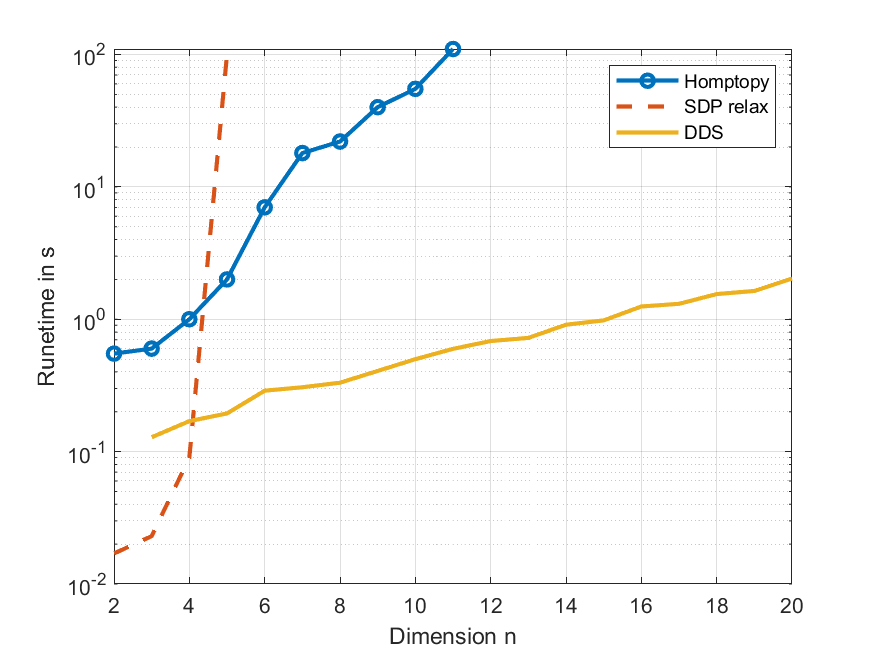}
  \caption{$k=n-1$}
  \label{fig:sub1}
\end{subfigure}%
\begin{subfigure}{.5\textwidth}
  \centering
  \includegraphics[width=\linewidth]{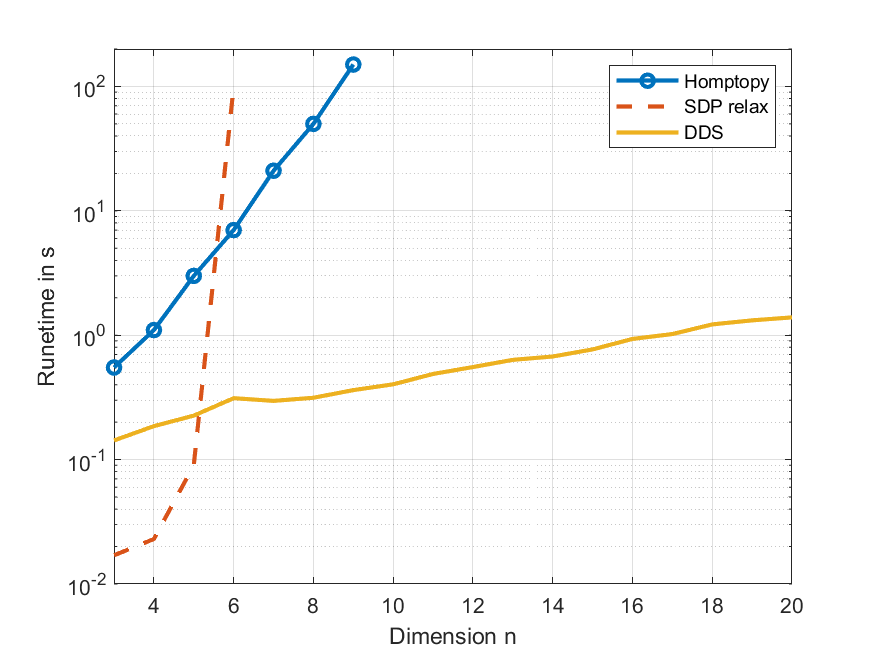}
  \caption{$k=\lfloor \frac{n+1}{2} \rfloor$}
  \label{fig:sub2}
\end{subfigure}
\caption{Comparing DDS with the results presented in \cite{klingler2025homotopy}-Figure 5 for the homotopy method and the SDP relaxation. As can be seen, DDS clearly outperforms both methods. }
\label{fig:homo}
\end{figure}

\subsection{Hyperbolic and Quantum Relative Entropy Programming}
To consider the combination of quantum relative entropy (QRE) and HB constraints, the two classes that are heavily improved in the last two version of DDS, we consider the following problem
\begin{eqnarray} \label{eq:QRE-HB-1}
\begin{array}{ccll}
\textup{min} &  & qre\left(I+\sum_{i=1}^n x_i A_i, I+\sum_{i=1}^n y_i B_i\right) & \\
 & & x  \ \ \in \Lambda_+(p,e) , &  \\
 & & x \ \ \geq \gamma \mathbbm{1}, &  \\
\end{array}
\end{eqnarray}
where $A_i \in \S^m$ and $B_i \in \S^m$, $i\in\{1,\ldots,m\}$, are sparse 0,1 random matrices (each matrix is all zero except for two off-diagonal entries). Table \ref{table:HB7} shows the results of solving this problem with DDS 3.0.  For each pair $(m,n)$, we consider two problem instances: one with $\gamma=0.1$, for which the problem admits an optimal solution, and one with $\gamma = 1.0$, for which the problem is infeasible.

\begin{table} [!ht] 
  \caption{Results for problems formulated as \eqref{eq:QRE-HB-1} where $p(\p{x})$ is a ESP with parameter $(n,k)$ and $m$ is the dimension of the matrices in the matrix pencil. Times are in seconds. }
  \label{table:HB7}
  \renewcommand*{\arraystretch}{1.3}
  \begin{tabular}{ |c | c| c | c | c |  c|c| }
    \hline
$m$  & $n$ &  $k$  &  $\gamma$ & Status &  Iter  & time \\ \hline 
10   &  10   &   5  &   0.1 & Opt Soln & 11   &   0.6   \\ \hline
10   &  10   &   5  &   1.0 & Infeasible & 16   &   0.7   \\ \hline
20   &  10   &   5  &   0.1 & Opt Soln & 12   &   1.8   \\ \hline
20   &  10   &   5  &   1.0 & Infeasible & 19   &   1.1   \\ \hline
30   &  50   &   10  &   0.1 & Opt Soln & 18   &   8.2   \\ \hline
30   &  50   &   10  &   1.0 & Infeasible & 22   &   10.1   \\ \hline
30   &  100   &   10  &   0.1 & Opt Soln & 22   &   20.6   \\ \hline
30   &  100   &   10  &   1.0 & Infeasible & 24   &   19.9   \\ \hline
50   &  50   &   10  &   0.1 & Opt Soln & 14   &   14.8   \\ \hline
50   &  50   &   10  &   1.0 & Infeasible & 24   &   13.3   \\ \hline
50   &  200   &   10  &   0.1 & Opt Soln & 31   &   71.0   \\ \hline
50   &  200   &   10  &   1.0 & Infeasible & 28   &   59.1   \\ \hline
100   &  50   &   10  &   0.1 & Opt Soln & 14   &   21.0   \\ \hline
100   &  50   &   10  &   1.0 & Infeasible & 29   &   35.4   \\ \hline
100   &  500   &   10  &   0.1 & Opt Soln & 33   &   529.3   \\ \hline
100   &  500   &   10  &   1.0 & Infeasible & 36   &   355.4   \\ \hline
100   &  1000   &   10  &   0.1 & Opt Soln & 51   &   2710.4   \\ \hline
100   &  1000   &   10  &   1.0 & Infeasible & 39   &   1759.0   \\ \hline

\end{tabular}
\end{table}

\subsection{Unbounded Hyperbolic Programming Instances}
In order to provide a comprehensive set of benchmark instances, we introduce a class of unbounded hyperbolic programming problems. These problems involve the direct sum of two hyperbolicity cones generated by elementary symmetric polynomials (ESP) as defined in \eqref{eq:hyper-unb}, where $p_1$ and $p_2$ are ESPs with parameters $(n,k_1)$ and $(n,k_2)$, respectively. This setup emphasizes that in DDS, we can define a set as a direct sum of an arbitrary number of hyperbolicity cones and other covered convex sets.  The results of solving the problem with DDS 3.0 are given in Table \ref{table:HB-unb}. When the objective function contains $\p{x}-\p{y}$, the problem is unbounded, and when it contains $\p{x}+\p{y}$, the problem has an optimal solution. 

\begin{eqnarray}  \label{eq:hyper-unb}
\begin{array}{rrl}
\min  & \sum_{i=1}^n  (\p{x} \pm \p{y})_i   &   \\
\text{s.t.}  &   \p{x}   & \in \Lambda_+(p_1,\p{e_1})    \\
  &   \p{y}   & \in \Lambda_+(p_2,\p{e_2})  
\end{array}
\end{eqnarray}

\begin{table} [!ht] 
  \caption{ \small Results for problems formulated as \eqref{eq:hyper-unb} where $p_1$ and $p_2$ are ESPs with parameters $(n,k_1)$ and $(n,k_2)$, respectively. When the objective function contains $\p{x}-\p{y}$, the problem is unbounded, and when it contains $\p{x}+\p{y}$, the problem has an optimal solution. Times are in seconds. }
  \label{table:HB-unb}
  \renewcommand*{\arraystretch}{1.3}
  \begin{tabular}{ |c | c| c | c | c | c|  }
    \hline
 \multirow{2}{*}{$n$}  &  \multirow{2}{*}{$k_1$}  &  \multirow{2}{*}{$k_2$}     &\multirow{2}{*}{Status}   &    \multicolumn{2}{|c|}{DDS 3.0} \\   \cline{5-6}
 &     &   &    & Ite & time \\ \hline 
 20 & 10  & 5    & Opt Soln   &    10  &  1.93   \\ \hline
 20 & 10  & 5    & Unbounded   &    14  &  2.86   \\ \hline
 20 & 15  & 10    & Opt Soln   &    8  &  2.25   \\ \hline
 20 & 15  & 10    & Unbounded   &    15  &  4.41   \\ \hline
 50 & 10  & 5    & Opt Soln   &    12  &  6.11   \\ \hline
 50 & 10  & 5    & Unbounded   &    13  &  6.11   \\ \hline
 50 & 25  & 10    & Opt Soln   &    13  &  12.43   \\ \hline
 50 & 25  & 10    & Unbounded   &    14  &  20.37   \\ \hline
 50 & 30  & 20    & Opt Soln   &    11  &  21.44   \\ \hline
 50 & 30  & 20    & Unbounded   &    16  &  28.79   \\ \hline
 100 & 10  & 5    & Opt Soln   &    15  &  19.17   \\ \hline
 100 & 10  & 5    & Unbounded   &    14  &  20.96   \\ \hline
 100 & 20  & 10    & Opt Soln   &    15  &  43.77   \\ \hline
 100 & 20  & 10    & Unbounded   &    14  &  42.14   \\ \hline
 100 & 30  & 20    & Opt Soln   &    16  &  69.95   \\ \hline
 100 & 30  & 20    & Unbounded   &    15  &  77.63   \\ \hline
 200 & 10  & 5    & Opt Soln   &    17  &  57.23   \\ \hline
 200 & 10  & 5    & Unbounded   &    14  &  58.52   \\ \hline
 200 & 20  & 10    & Opt Soln   &    19  &  126.01   \\ \hline
 200 & 20  & 10    & Unbounded   &    14  &  115.80   \\ \hline
 200 & 30  & 20    & Opt Soln   &    18  &  203.80   \\ \hline
 200 & 30  & 20    & Unbounded   &    16  &  187.96   \\ \hline
 500 & 10  & 5    & Opt Soln   &    19  &  241.28   \\ \hline
 500 & 10  & 5    & Unbounded   &    14  &  232.18   \\ \hline
 500 & 20  & 10    & Opt Soln   &    20  &  617.93   \\ \hline
 500 & 20  & 10    & Unbounded   &    14  &  479.62   \\ \hline
 500 & 30  & 20    & Opt Soln   &    21  &  1007.14   \\ \hline
 500 & 30  & 20    & Unbounded   &    15  &  895.54   \\ \hline

\end{tabular}
\end{table}

\subsection{Hyperbolic Programming combined with other classes}
Although the primary improvements in the manuscript pertain to the treatment of hyperbolic polynomials represented via straight-line programs (SLPs), the updated solver DDS 3.0 demonstrates general enhancements over DDS 2.2, even for problems represented by monomials. These improvements are largely due to the introduction of a new corrector step, which eliminates the need for tuning at every iteration.
Table \ref{table:HB} includes benchmark problems from our library that were also reported in earlier versions of DDS and are now solved using DDS 3.0. These are a set of problems with combinations of hyperbolic polynomial inequalities and other types of constraints such as those arising from LP, SOCP, and entropy function. As shown, DDS 3.0 achieves at least a twofold reduction in runtime for these instances.  

\begin{table} [!ht] 
  \caption{Results for problems involving hyperbolic polynomials using DDS 2.2 and DDS 3.0. The times are in seconds.}
  \label{table:HB}
  \renewcommand*{\arraystretch}{1.3}
  \begin{tabular}{ |c | c| c | c | c |  c| c|c|}
    \hline
\multirow{2}{*}{Problem}  &    \multirow{2}{*}{size of $A$}   &    \multirow{2}{*}{var/deg of $p(z)$} & \multirow{2}{*}{Type of Constraints}   &    Iter/time &   Iter/time \\ 

&&&& DDS 2.2& DDS 3.0 \\ \hline 
%Vamos-1  & $8*4$  & 8/4  &  HB & 7/ \ 0.7    \\ \hline
%Vamos-LP-1  & $12*4$  & 8/4  &  HB-LP & 8/ \ 0.3    \\ \hline
%Vamos-SOCP-1  & $17*5$  & 8/4  &  HB-LP-SOCP & 11/ \ 0.5    \\ \hline
%Vamos-Entr-1  & $17*5$  & 8/4  &  HB-LP-Entropy & 9/ \ 1.49    \\ \hline
VL-Entr-1  & $41*11$  & 20/4  &  HB-LP-Entropy & 11/\ 7.2 & 10/ \ 4.4   \\ \hline
VL-Entr-2  & $61*16$  & 30/4  &  HB-LP-Entropy & 12/\ 60.2  & 10/ \ 27.8  \\ \hline
HPLin-Entr-1  & $111*51$  & 10/15  &  HB-LP-Entropy & 19/\ 12.8 & 15/\ 3.5    \\ \hline
HPLinDer-Entr-1  & $111*51$  & 10/15  &  HB-LP-Entropy & 18 /\ 36.2  & 14/\ 12.8   \\ \hline
HPLinDer-pn-1  & $111*51$  & 10/15  &  HB-LP-pnorm & 32/61.6 & 36/\ 34.6    \\ \hline
Elem-Entr-10-2  & $221*101$  & 10/4  &  HB-LP-Entropy & 18/\ 11.3 & 17 /\ 5.6   \\ \hline
\end{tabular}
\end{table}

\section{Conclusion}
In this manuscript, we developed efficient implementations of interior-point method modules for solving hyperbolic programming problems where the defining polynomial is represented using a straight-line program (SLP). We introduced several classes of hyperbolic polynomials that can be efficiently expressed in SLP form and used these formulations to construct a comprehensive benchmark library. The proposed methods have been integrated into the latest release of our optimization package, DDS 3.0, substantially improving its performance and scalability. DDS 3.0 can now handle practically sized hyperbolic programming problems and can be combined with other convex programming classes for broader applicability.
This work is expected to stimulate renewed interest in developing efficient interior-point methods for hyperbolic programming and comparing them with other methods such as first-order methods. Beyond the improvements introduced in this study, future efforts will focus on expanding the benchmark library with additional families of hyperbolic polynomials that admit efficient straight-line program (SLP) representations. We also aim to explore further applications and theoretical directions arising from hyperbolic programming.

\renewcommand{\baselinestretch}{1}
\bibliographystyle{siam}
\bibliography{References}

\begin{thebibliography}{10}

\bibitem{amini2019}
{\sc N.~Amini}, {\em Spectrahedrality of hyperbolicity cones of multivariate
  matching polynomials}, Journal of Algebraic Combinatorics, 50 (2019),
  pp.~165--190.

\bibitem{AminiBranden2018}
{\sc N.~Amini and P.~Br\"{a}nd\'{e}n}, {\em Non-representable hyperbolic
  matroids}, Adv. Math., 334 (2018), pp.~417--449.

\bibitem{AOV2018}
{\sc N.~Anari, S.~Oveis~Gharan, and C.~Vinzant}, {\em Log-concave polynomials,
  entropy, and a deterministic approximation algorithm for counting bases of
  matroids}, in 59th {A}nnual {IEEE} {S}ymposium on {F}oundations of {C}omputer
  {S}cience---{FOCS} 2018, IEEE Computer Soc., Los Alamitos, CA, 2018,
  pp.~35--46.

\bibitem{bauschke2001hyperbolic}
{\sc H.~H. Bauschke, O.~G{\"u}ler, A.~S. Lewis, and H.~S. Sendov}, {\em
  Hyperbolic polynomials and convex analysis}, Canadian Journal of Mathematics,
  53 (2001), pp.~470--488.

\bibitem{borcea2009lee}
{\sc J.~Borcea and P.~Br{\"a}nd{\'e}n}, {\em The {L}ee-{Y}ang and
  {P}{\'o}lya-{S}chur programs. {I}. linear operators preserving stability},
  Inventiones mathematicae, 177 (2009), pp.~541--569.

\bibitem{branden2007polynomials}
{\sc P.~Br{\"a}nd{\'e}n}, {\em Polynomials with the half-plane property and
  matroid theory}, Advances in Mathematics, 216 (2007), pp.~302--320.

\bibitem{branden2011obstructions}
\leavevmode\vrule height 2pt depth -1.6pt width 23pt, {\em Obstructions to
  determinantal representability}, Advances in Mathematics, 226 (2011),
  pp.~1202--1212.

\bibitem{branden2014hyperbolicity}
\leavevmode\vrule height 2pt depth -1.6pt width 23pt, {\em Hyperbolicity cones
  of elementary symmetric polynomials are spectrahedral}, Optimization Letters,
  8 (2014), pp.~1773--1782.

\bibitem{burton2014real}
{\sc S.~Burton, C.~Vinzant, and Y.~Youm}, {\em A real stable extension of the
  {V}amos matroid polynomial}, arXiv preprint arXiv:1411.2038,  (2014).

\bibitem{choe2004homogeneous}
{\sc Y.-B. Choe, J.~G. Oxley, A.~D. Sokal, and D.~G. Wagner}, {\em Homogeneous
  multivariate polynomials with the half-plane property}, Advances in Applied
  Mathematics, 32 (2004), pp.~88--187.

\bibitem{coey2022solving}
{\sc C.~Coey, L.~Kapelevich, and J.~P. Vielma}, {\em Solving natural conic
  formulations with hypatia. jl}, INFORMS Journal on Computing, 34 (2022),
  pp.~2686--2699.

\bibitem{DTV2025}
{\sc J.~Dahl, L.~Tun{\c{c}}el, and L.~Vandenberghe}, {\em New complexity bounds
  for primal--dual interior-point algorithms in conic optimization}, arXiv
  preprint arXiv:2509.10263,  (2025).

\bibitem{goodfellow2016deep}
{\sc I.~Goodfellow, Y.~Bengio, A.~Courville, and Y.~Bengio}, {\em Deep
  learning}, vol.~1, MIT press Cambridge, 2016.

\bibitem{Clarabel_2024}
{\sc P.~J. Goulart and Y.~Chen}, {\em Clarabel: An interior-point solver for
  conic programs with quadratic objectives}, 2024.

\bibitem{griewank2008evaluating}
{\sc A.~Griewank and A.~Walther}, {\em Evaluating derivatives: principles and
  techniques of algorithmic differentiation}, SIAM, 2008.

\bibitem{guler1997hyperbolic}
{\sc O.~G{\"u}ler}, {\em Hyperbolic polynomials and interior point methods for
  convex programming}, Mathematics of Operations Research, 22 (1997),
  pp.~350--377.

\bibitem{gurvits2006hyperbolic}
{\sc L.~Gurvits}, {\em Hyperbolic polynomials approach to {V}an der
  {W}aerden/{S}chrijver-{V}aliant like conjectures: sharper bounds, simpler
  proofs and algorithmic applications}, in Proceedings of the thirty-eighth
  annual ACM symposium on Theory of computing, 2006, pp.~417--426.

\bibitem{Gurvits2008}
\leavevmode\vrule height 2pt depth -1.6pt width 23pt, {\em Van der
  {W}aerden/{S}chrijver-{V}aliant like conjectures and stable (aka hyperbolic)
  homogeneous polynomials: one theorem for all}, Electron. J. Combin., 15
  (2008), pp.~Research Paper 66, 26.
\newblock With a corrigendum.

\bibitem{HV2007}
{\sc J.~W. Helton and V.~Vinnikov}, {\em Linear matrix inequality
  representation of sets}, Comm. Pure Appl. Math., 60 (2007), pp.~654--674.

\bibitem{helton2007linear}
{\sc J.~W. Helton and V.~Vinnikov}, {\em Linear matrix inequality
  representation of sets}, Communications on Pure and Applied Mathematics: A
  Journal Issued by the Courant Institute of Mathematical Sciences, 60 (2007),
  pp.~654--674.

\bibitem{DDS3.0}
{\sc M.~Karimi and L.~Tun\c{c}el}, {\em {DDS (Domain-Driven Solver) version
  3.0}}.
\newblock Available at \url{https://github.com/mehdi-karimi-math/DDS}.

\bibitem{Karimi_Library_for_Modern}
\leavevmode\vrule height 2pt depth -1.6pt width 23pt, {\em {Library for Modern
  Convex Optimization}}.
\newblock Available at
  \url{https://github.com/mehdi-karimi-math/Library-for-Modern-Convex-Optimization}.

\bibitem{DDS}
\leavevmode\vrule height 2pt depth -1.6pt width 23pt, {\em Domain-{D}riven
  {S}olver ({DDS}) {V}ersion 2.1: {A} {MATLAB}-based software package for
  convex optimization problems in domain-driven form}, Math. Program. Comput.,
  16 (2024), pp.~37--92.

\bibitem{karimi2020primal}
{\sc M.~Karimi and L.~Tun{\c{c}}el}, {\em Primal--dual interior-point methods
  for domain-driven formulations}, Mathematics of Operations Research, 45
  (2020), pp.~591--621.

\bibitem{karimi_status_arxiv}
\leavevmode\vrule height 2pt depth -1.6pt width 23pt, {\em Status determination
  by interior-point methods for convex optimization problems in
  {D}omain-{D}riven form}, Mathematical Programming, 194 (2022), pp.~937--974.

\bibitem{karimi2025efficient}
{\sc M.~Karimi and L.~Tuncel}, {\em Efficient implementation of interior-point
  methods for quantum relative entropy}, INFORMS Journal on Computing, 37
  (2025), pp.~3--21.

\bibitem{klingler2025homotopy}
{\sc A.~Klingler and T.~Netzer}, {\em Homotopy methods for convex
  optimization}, SIAM Journal on Optimization, 35 (2025), pp.~1498--1523.

\bibitem{K2021}
{\sc M.~Kummer}, {\em Spectral linear matrix inequalities}, Adv. Math., 384
  (2021), pp.~Paper No. 107749, 36.

\bibitem{LPR2005}
{\sc A.~S. Lewis, P.~A. Parrilo, and M.~V. Ramana}, {\em The {L}ax conjecture
  is true}, Proc. Amer. Math. Soc., 133 (2005), pp.~2495--2499.

\bibitem{MSS2015}
{\sc A.~W. Marcus, D.~A. Spielman, and N.~Srivastava}, {\em Interlacing
  families {II}: {M}ixed characteristic polynomials and the {K}adison-{S}inger
  problem}, Ann. of Math. (2), 182 (2015), pp.~327--350.

\bibitem{mosek}
{\sc {MOSEK ApS}}, {\em The MOSEK optimization toolbox for MATLAB manual.
  Version 9.0.}, 2019.
\newblock \url{http://docs.mosek.com/9.0/toolbox/index.html}.

\bibitem{Myklebust2015}
{\sc T.~G.~J. Myklebust}, {\em On primal-dual interior-point algorithms for
  convex optimisation}, PhD thesis, Department of Combinatorics and
  Optimization, Faculty of Mathematics, University of Waterloo.

\bibitem{nagano2024projection}
{\sc T.~Nagano, B.~F. Louren{\c{c}}o, and A.~Takeda}, {\em Projection onto
  hyperbolicity cones and beyond: a dual frank-wolfe approach}, arXiv preprint
  arXiv:2407.09213,  (2024).

\bibitem{interior-book}
{\sc Y.~Nesterov and A.~Nemirovski}, {\em Interior-Point Polynomial Algorithms
  in Convex Programming}, SIAM Series in Applied Mathematics, SIAM:
  Philadelphia, 1994.

\bibitem{NT2016}
{\sc Y.~Nesterov and L.~Tun{\c{c}}el}, {\em Local superlinear convergence of
  polynomial-time interior-point methods for hyperbolicity cone optimization
  problems}, SIAM J. Optim., 26 (2016), pp.~139--170.

\bibitem{NS2015}
{\sc T.~Netzer and R.~Sanyal}, {\em Smooth hyperbolicity cones are
  spectrahedral shadows}, Math. Program., 153 (2015), pp.~213--221.

\bibitem{Oliveira2020}
{\sc R.~Oliveira}, {\em Conditional lower bounds on the spectrahedral
  representation of explicit hyperbolicity cones}, in Proceedings of the 45th
  International Symposium on Symbolic and Algebraic Computation, 2020,
  pp.~396--401.

\bibitem{alfonso}
{\sc D.~Papp and S.~Y{\i}ld{\i}z}, {\em {A}lfonso: Matlab package for
  nonsymmetric conic optimization}, INFORMS Journal on Computing,  (2021).

\bibitem{Petrovsky1937}
{\sc I.~G. Petrovsky}, {\em {\"U}ber das {C}auchysche {P}roblem f{\"u}r
  {S}ysteme von partiellen {D}ifferentialgleichungen}, Recueil Math{\'e}matique
  (Matematicheskii Sbornik), 2 (44) (1937), pp.~815--870.

\bibitem{RRSW2019}
{\sc P.~Raghavendra, N.~Ryder, N.~Srivastava, and B.~Weitz}, {\em Exponential
  lower bounds on spectrahedral representations of hyperbolicity cones}, in
  Proceedings of the Thirtieth Annual ACM-SIAM Symposium on Discrete
  Algorithms, SIAM, 2019, pp.~2322--2332.

\bibitem{renegar2006hyperbolic}
{\sc J.~Renegar}, {\em Hyperbolic programs, and their derivative relaxations},
  Foundations of Computational Mathematics, 6 (2006), pp.~59--79.

\bibitem{saunderson2015polynomial}
{\sc J.~Saunderson and P.~A. Parrilo}, {\em Polynomial-sized semidefinite
  representations of derivative relaxations of spectrahedral cones},
  Mathematical Programming, 153 (2015), pp.~309--331.

\bibitem{SV2018}
{\sc E.~Shamovich and V.~Vinnikov}, {\em Livsic-type determinantal
  representations and hyperbolicity}, Adv. Math., 329 (2018), pp.~487--522.

\bibitem{sedumi}
{\sc J.~F. Sturm}, {\em Using {SeDuMi} 1.02, a {MATLAB} toolbox for
  optimization over symmetric cones}, Optimization Methods and Software, 11
  (1999), pp.~625--653.

\bibitem{toh1999sdpt3}
{\sc K.-C. Toh, M.~J. Todd, and R.~H. T{\"u}t{\"u}nc{\"u}}, {\em {SDPT3}-- a
  {MATLAB} software package for semidefinite programming, version 1.3},
  Optimization Methods and Software, 11 (1999), pp.~545--581.

\bibitem{Vinnikov1993}
{\sc V.~Vinnikov}, {\em Selfadjoint determinantal representations of real plane
  curves}, Math. Ann., 296 (1993), pp.~453--479.

\bibitem{vinnikov2012lmi}
\leavevmode\vrule height 2pt depth -1.6pt width 23pt, {\em Lmi representations
  of convex semialgebraic sets and determinantal representations of algebraic
  hypersurfaces: past, present, and future}, in Mathematical Methods in
  Systems, Optimization, and Control: Festschrift in Honor of J. William
  Helton, Springer, 2012, pp.~325--349.

\bibitem{Wagner2011}
{\sc D.~G. Wagner}, {\em Multivariate stable polynomials: theory and
  applications}, Bull. Amer. Math. Soc. (N.S.), 48 (2011), pp.~53--84.

\bibitem{wagner2009criterion}
{\sc D.~G. Wagner and Y.~Wei}, {\em A criterion for the half-plane property},
  Discrete Mathematics, 309 (2009), pp.~1385--1390.

\bibitem{Zinchenko2008}
{\sc Y.~Zinchenko}, {\em On hyperbolicity cones associated with elementary
  symmetric polynomials}, Optim. Lett., 2 (2008), pp.~389--402.

\end{thebibliography}

\appendix

\end{document}